\newcommand{\R}{\mathbb{R}}
\newcommand{\dd}{f}
\newcommand{\Z}{\mathbb{Z}}
\newcommand{\N}{\mathbb{N}}
\newcommand{\one}{\mathbf{1}}
\newcommand{\lang}{\Big\langle}
\newcommand{\rang}{\Big\rangle}
\newcommand{\vol}{\text{vol}}
\newtheorem{thm}{Theorem}
\newtheorem{lem}[thm]{Lemma}
\newtheorem{prop}[thm]{Proposition}
\newtheorem{defi}[thm]{Definition}
\def\vol{\mathrm{vol}}
\title{Enumeration formulas for Young tableaux in a diagonal strip}
\author{Yuliy Baryshnikov\thanks{Mathematical and Algorithmic Sciences, Bell Laboratories, Murray Hill, NJ, USA. email: \texttt{ymb@research.bell-labs.com} .}\and
Dan Romik\thanks{Einstein Institute of Mathematics, The Hebrew University, Jerusalem, Israel. email: \texttt{romik@math.huji.ac.il} .}
}
\begin{document}

\maketitle

\abstract{We derive combinatorial identities, involving the Bernoulli and Euler
numbers, for the numbers of standard Young tableaux of certain skew shapes. This
generalizes the classical formulas of D. Andr\'e on the number of
up-down permutations. The analysis uses a transfer operator approach
extending the method of Elkies, combined with an identity expressing the volume of a certain polytope in terms of a Schur function.  }

\vspace{130.0pt}
\noindent Key words: Young tableau, skew Young diagram, enumeration, up-down permutations, Schur functions, Markovian polytope, transfer operator, exactly solvable model.

\bigskip \medskip \noindent MSC 2000 subject classifications: 05E10, 05A15, 05A19, 82B23

\newpage

\section{Introduction}

\subsection{Up-down permutations and the Euler, tangent and Bernoulli numbers}

An up-down permutation on $n$ elements is a permutation $\sigma \in S_n$ satisfying
$$ \sigma(1) < \sigma(2) > \sigma(3) < \sigma(4) > \cdots .$$ Up-down
permutations, also known as zig-zag- or alternating permutations, were
first studied in 1879 by D. Andr\'e \cite{andre1, andre2}. He showed
that if $A_n$ denotes the number of $n$-element up-down permutations,
then the exponential generating function
$$ f_{\text{up-down}}(x) = \sum_{n=0}^\infty \frac{A_n x^n}{n!} $$
is given by
\begin{equation}\label{eq:andreformula}
 f_{\text{up-down}}(x) = \tan x + \sec x.
 \end{equation}
In other words, $\tan x$ is the exponential g.f. of the even-indexed
$A_n$'s, and $\sec x$ is the e.g.f. of the odd-indexed $A_n$'s. This
relates the sequence $(A_n)$ to the  \emph{Euler numbers}
$E_n$ and \emph{tangent numbers} $T_n$, traditionally defined by the
Taylor series expansions
\begin{eqnarray*}
\sec x &=& \sum_{n=0}^\infty  \frac{(-1)^nE_{2n} x^{2n}}{(2n)!}, \\
\tan x &=& \sum_{n=1}^\infty \frac{T_n x^{2n-1}}{(2n-1)!}.
\end{eqnarray*}
In terms of these numbers, we have that
\begin{equation}\label{eq:updowneulertan}
 A_{2n} = (-1)^n E_{2n}, \qquad A_{2n-1} = T_n. \end{equation}
Recall also that the
tangent numbers are related to the \emph{Bernoulli
numbers} $B_n$, defined by the Taylor series expansion
$$ \frac{x}{e^x-1} = \sum_{n=0}^\infty \frac{B_n x^n}{n!},
$$
via the relation
\begin{equation} \label{eq:viarelation}
T_n = \frac{(-1)^{n-1} 4^n (4^n-1)}{2n} B_{2n}.
\end{equation}
For an amusing
appearance of Euler and tangent numbers unrelated to up-down
permutations, see \cite{borweinetal}. The notation $E_n, T_n$ and
$B_n$ will be used throughout the paper, always signifying the Euler,
tangent and Bernoulli numbers, respectively, and $A_n$ will be used
throughout to denote the number of up-down permutations of order $n$,
given by \eqref{eq:updowneulertan}.

\subsection{Standard Young tableaux}

It is well-known (\cite[Ex.\ 23, p. 68]{knuthvol3}, \cite[Ex.\ 7.64.a,
p. 469--470, 520]{stanleyvol2}), that up-down permutations can be
thought of as a special case of a \emph{standard Young tableau}. Recall that an \emph{integer partition}
is a sequence $\lambda=(\lambda_1,\lambda_2,\ldots,\lambda_k)$, where $\lambda_1\ge\lambda_2>\ldots>\lambda_k>0$ are integers. We identify such a partition $\lambda$ with its \emph{Young diagram}, which is the set $\{ (i,j)\in \N^2 : 1\le i\le k,\ 1\le j\le \lambda_i \}$, graphically depicted as a set of squares, (also called \emph{cells} or \emph{boxes}) in the plane, traditionally in the ``English notation'' whereby $y$-coordinate increases from top to bottom (similarly to matrix row indices in linear algebra). A \emph{skew Young diagram} is the difference $\lambda\setminus \mu$ of two Young diagrams where $\mu\subset \lambda$. If $\lambda\setminus \mu$ is a skew Young diagram, a \emph{standard Young tableau} (SYT) of shape $\lambda\setminus \mu$ is a filling of the boxes of $\lambda\setminus \mu$ with the integers $1,2,\ldots,|\lambda\setminus \mu|$ that is increasing along rows and columns. See Figure \ref{fig1} for an example.

\begin{figure}[h!] 
\begin{center}
\begin{tabular}{cc}
\begin{picture}(100,100)(0,0)
\multiput(10,10)(17,0){2}{\framebox(17,17)}
\multiput(27,27)(17,0){2}{\framebox(17,17)}
\multiput(27,44)(17,0){4}{\framebox(17,17)}
\multiput(44,61)(17,0){3}{\framebox(17,17)}
\multiput(44,78)(17,0){3}{\framebox(17,17)}
\end{picture}
& 
\begin{picture}(100,100)(0,0)
\multiput(10,10)(17,0){2}{\framebox(17,17)}
\multiput(27,27)(17,0){2}{\framebox(17,17)}
\multiput(27,44)(17,0){4}{\framebox(17,17)}
\multiput(44,61)(17,0){3}{\framebox(17,17)}
\multiput(44,78)(17,0){3}{\framebox(17,17)}
\put(15,15){$5$}
\put(30,15){$11$}
\put(32,32){$6$}
\put(49,32){$8$}
\put(32,49){$1$}
\put(49,49){$4$}
\put(64,49){$12$}
\put(81,49){$14$}
\put(49,66){$3$}
\put(64,66){$10$}
\put(81,66){$13$}
\put(49,83){$2$}
\put(66,83){$7$}
\put(83,83){$9$}
\end{picture}  \\
 (a) & (b)
\end{tabular}
\caption{(a) The skew Young diagram $(5,5,5,3,2)\setminus (2,2,1,1,0)$. (b) A~standard Young tableau (SYT). \label{fig1}}
\end{center}
\end{figure}
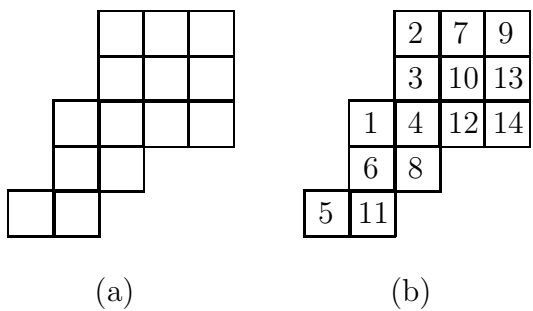

Up-down permutations on $n$ elements are in simple bijection with SYT's of shape
$$ (m+1,m,m-1,m-2,\ldots,3,2)\setminus (m-1,m-2,\ldots,1,0)$$
when $n=2m$ is even, or
$$ (m,m,m-1,m-2,\ldots,3,2)\setminus (m-1,m-2,\ldots,1,0)$$
when $n=2m-1$ is odd.
The bijection converts such an SYT into
an up-down permutation by reading the values starting from the
bottom-left box and going alternately to the right and up.
See Figure \ref{fig2}.

\begin{figure}[h!] 
\begin{center}
\begin{tabular}{cc}
\begin{picture}(180,70)(0,45)
\multiput(10,10)(17,17){5}{\framebox(17,17)}
\multiput(27,10)(17,17){5}{\framebox(17,17)}
\put(15,15){$5$}
\put(32,32){$2$}
\put(49,49){$4$}
\put(66,66){$3$}
\put(83,83){$1$}
\put(32,15){$8$}
\put(49,32){$9$}
\put(63,49){$10$}
\put(83,66){$7$}
\put(100,83){$6$}
\end{picture}  \!\!\!\!\!\!\!\!\!\!\!\!\!\!\!\!\!\!\!\!\!\!\!\!\!\!\!\!\!\!\!\!\!\!\!\!\!\!
& $\sigma = \left( \begin{array}{cccccccccc}
 1 &2 &3 &4 &5 &6 &7 &8 &9 &10 \\
 5 & 8 & 2 & 9 & 4 & 10 & 3 & 7 & 1 & 6
 \end{array}\right)$
\end{tabular}
\bigskip\bigskip
\caption{A standard Young tableau of shape $(5,5,4,3,2,1)\setminus (4,3,2,1,0,0)$ and the associated
up-down permutation. \label{fig2}}
\end{center}
\end{figure}
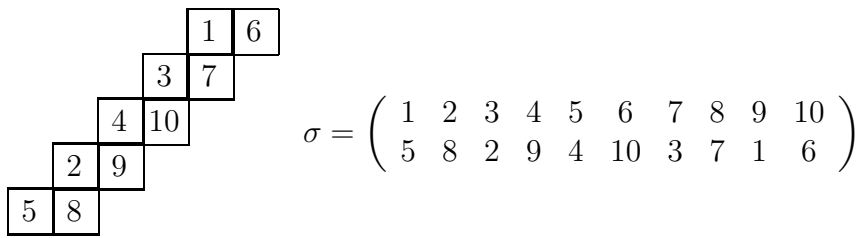

Our main interest will be in enumerating SYT's of
shapes in a special family. For a skew Young diagram $D=\lambda\setminus\mu$, denote by $\dd(D)$ the number of
SYT's of shape $D$ (this is frequently denoted by $f^{\lambda\setminus \mu}$, but the current notation will be more convenient for our purposes). Recall the formula of Aitken \cite{aitken} (later rediscovered by Feit \cite{feit}; see also \cite[Corollary 7.16.3, p. 344]{stanleyvol2}, \cite[Ex. 19, p. 67, 609]{knuthvol3}) for $\dd(D)$:
\begin{equation}
\label{eq:aitkeneq} 
\dd(D)=|\lambda\setminus \mu|! \det\Big( \frac{1}{(\lambda_i-i-\mu_j+j)!} \Big)_{i,j}.
\end{equation}
Using \eqref{eq:aitkeneq} can become difficult when the diagrams $\lambda$ and $\mu$ are large (even when their difference is small!), as the order of the determinant is equal to the number of parts in $\lambda$. As we shall see below, in certain cases one can give more concise formulas that also relate the numbers of SYT's of different shapes to each other, and notably to the ``zig-zag'' numbers $(A_n)_{n\ge 0}$.

\subsection{Standard Young tableaux in a strip}

The standard Young tableaux that we will consider will be of the following general shape, which we call an 
\emph{$m$-strip (diagonal) diagram}.

\begin{figure}[h!]
\begin{center}
{\includegraphics[height=6cm]{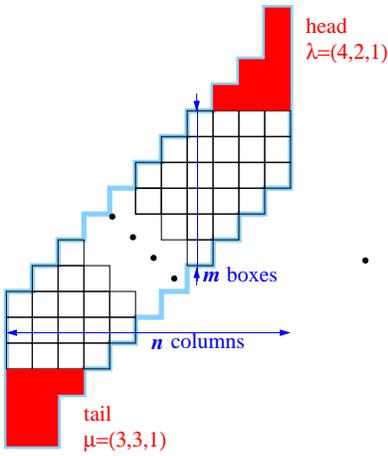}} 
\caption{An $m$-strip diagram. The head and tail rotated Young diagrams are read off in columns. \label{figmstrip}}
\end{center}
\end{figure}

The \emph{body} of the diagram consists of $n$ columns of $m$ boxes each. The \emph{head} and the \emph{tail}  are Young diagrams that are rotated and connected to the body by leaning against the sides of the body, as shown in the figure. We will usually think of $m$ as fixed and $n$ growing, which is why we use the term $m$-strip.

Our main result, Theorem \ref{generalthm} below, is a new formula for enumerating
$m$-strip tableaux (that is, SYT's whose shape is an $m$-strip diagram) in  terms of the zig-zag numbers $(A_n)_{n\ge 0}$. To introduce this formula, we start with some explicit formulas for small values of $m$, and then present the general formula that contains all these formulas as special cases.

\begin{thm}[$3$-strip tableaux] \label{thm3strip}
\begin{eqnarray}
\dd\left(
\raisebox{-3.1ex}{\includegraphics[width=1.4cm]{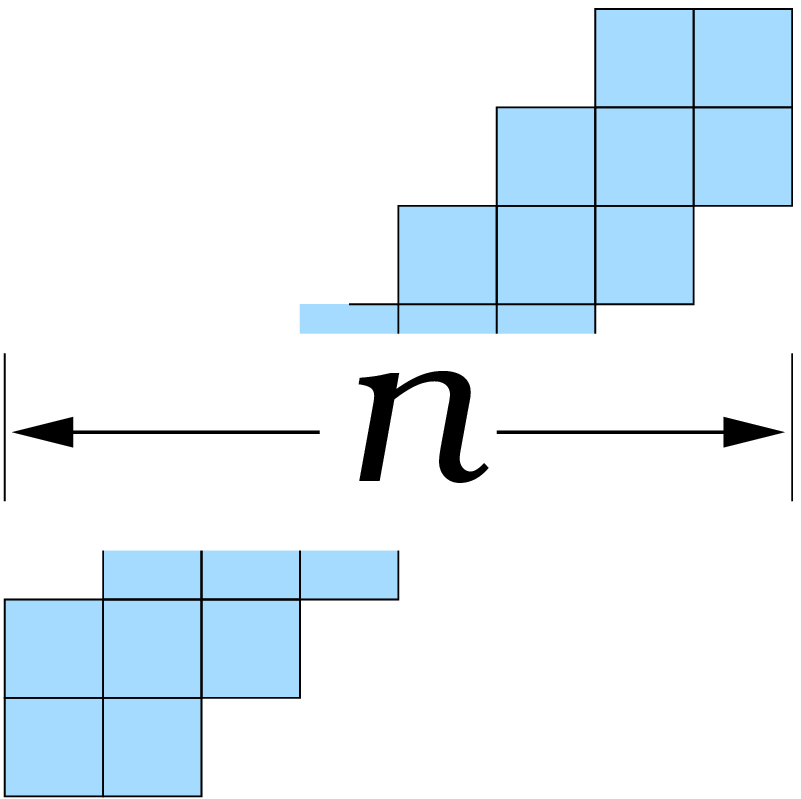}}
\right)
&=& 
\frac{(3n-2)!T_n}{(2n-1)!2^{2n-2}},
 \label{eq:form3strip1} \\
\dd\left(
\raisebox{-3.4ex}{\includegraphics[width=1.4cm]{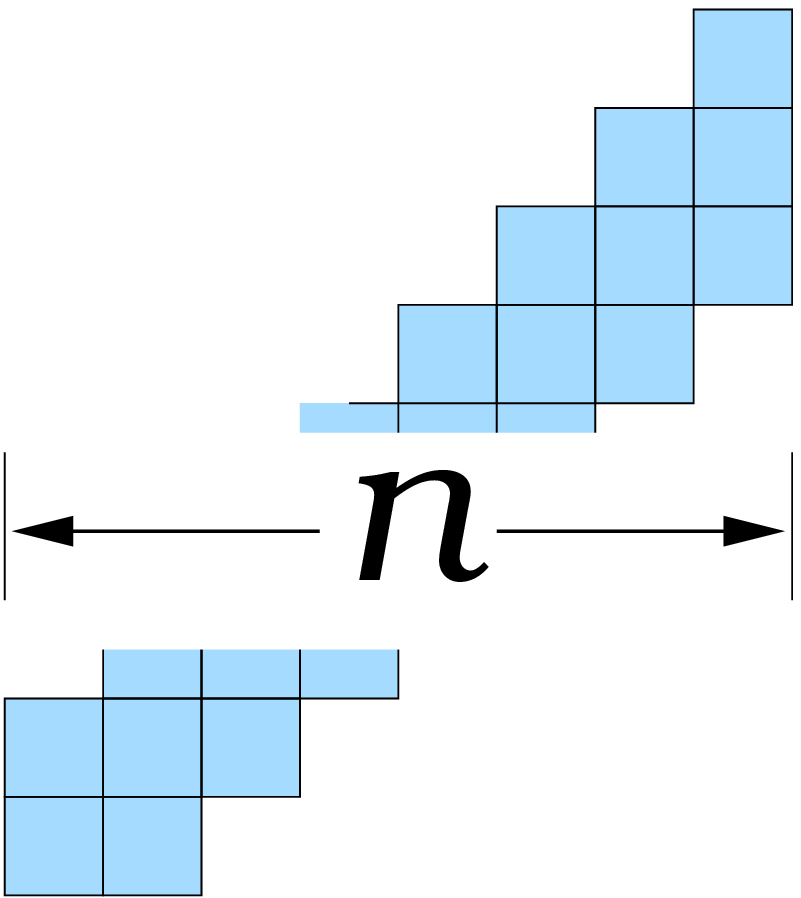}}
\right) 
&=& 
\frac{(3n-1)!T_n}{(2n-1)!2^{2n-1}}, \label{eq:form3strip2} \\
\dd\left(\raisebox{-3.7ex}{\includegraphics[width=1.4cm]{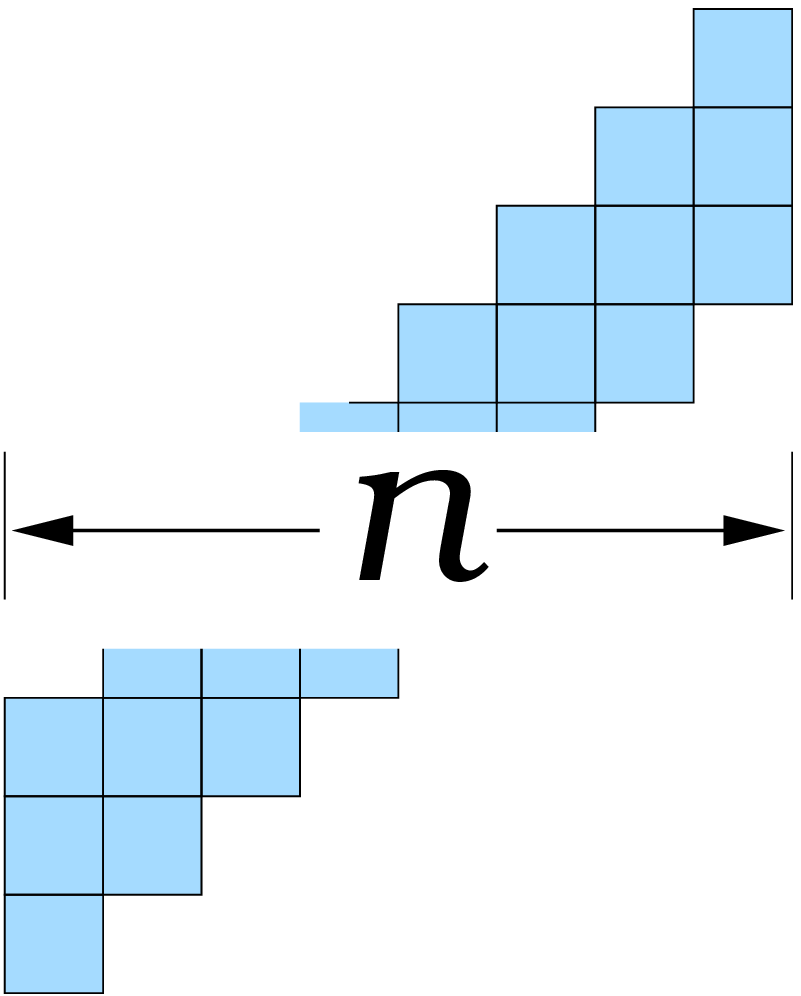}}\right) 
&=& 
\frac{(3n)!(2^{2n-1}-1)T_n}{(2n-1)!2^{2n-1}(2^{2n}-1)}. \label{eq:form3strip3}
\end{eqnarray}
\end{thm}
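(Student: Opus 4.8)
A self-contained route (bypassing the transfer operator needed for the general theorem) is to feed each of the three shapes directly into Aitken's formula~\eqref{eq:aitkeneq}, which writes $\dd(\lambda\setminus\mu)$ as $|\lambda\setminus\mu|!$ times a determinant. First I would read the partitions $\lambda,\mu$ off Figure~\ref{figmstrip} and record the fact that makes everything work: along the body of a diagonal $3$-strip every row of $\lambda\setminus\mu$ has length $3$ and consecutive rows are shifted by one column, so $\lambda_i-i$ and $\mu_j-j$ are arithmetic progressions of common difference $-2$ there. Consequently the Aitken entry $\tfrac1{(\lambda_i-i-\mu_j+j)!}$ equals $\tfrac1{(3+2(j-i))!}$ throughout the body, and this vanishes once $j-i\le -2$. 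Thus the Aitken matrix is lower-Hessenberg with unit subdiagonal, and Toeplitz with symbol $a_k=\tfrac1{(2k+3)!}$ ($k\ge 0$), up to a bounded number of rows/columns near the top-left and bottom-right corners that are altered by the head and the tail.

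For the pure case~\eqref{eq:form3strip3} the head and tail contribute nothing and the Aitken matrix is exactly $M_n=\bigl(\tfrac1{(3+2(j-i))!}\bigr)_{i,j=1}^{n}$. Repeated cofactor expansion along first columns yields the linear recurrence $d_n=\sum_{k\ge 0}(-1)^k a_k\,d_{n-1-k}$, $d_0=1$, where $d_n:=\det M_n$; hence $\sum_{n\ge0}d_n z^n=\bigl(1-\sum_{k\ge0}(-1)^k a_k z^{k+1}\bigr)^{-1}$. Under the substitution $z=-w^2$ the denominator equals $\sinh(w)/w$ (by the Taylor series of $\sinh$), so $\sum_{n\ge0}(-1)^n d_n w^{2n}=\tfrac{w}{\sinh w}$. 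Inserting the classical expansion $\tfrac{w}{\sinh w}=\sum_{n\ge0}\tfrac{(2-4^n)B_{2n}}{(2n)!}w^{2n}$ and eliminating $B_{2n}$ via~\eqref{eq:viarelation} (using $4^n-2=2(2^{2n-1}-1)$) gives $d_n=\dfrac{(2^{2n-1}-1)\,T_n}{(2n-1)!\,2^{2n-1}(2^{2n}-1)}$, and multiplying by $|\lambda\setminus\mu|!=(3n)!$ produces~\eqref{eq:form3strip3}.

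For the tapered cases~\eqref{eq:form3strip1}--\eqref{eq:form3strip2} the head/tail replaces a bounded number of boundary rows (or columns) of $M_n$ by other Toeplitz-type vectors. Because $M_n$ is banded, the minors appearing in the cofactor expansion along such a line are block-triangular and factor as $1\cdot d_{n-j}$, so the determinant of the modified matrix has generating function $\bigl(\sum_n d_n z^n\bigr)$ times an \emph{elementary} correction factor read off from the modifying vectors. Carrying out this bordering and simplifying with the half-angle identity $\tanh(w/2)=\tfrac{\cosh w-1}{\sinh w}$ turns the generating function into a $w\tanh(w/2)$-type series; since $[w^{2n}]\,w\tanh(w/2)=\tfrac{(-1)^{n-1}T_n}{(2n-1)!\,2^{2n-1}}$, one arrives at the coefficients $\tfrac{T_n}{(2n-1)!\,2^{2n-2}}$ and $\tfrac{T_n}{(2n-1)!\,2^{2n-1}}$, which, multiplied by $(3n-2)!$ and $(3n-1)!$, give~\eqref{eq:form3strip1} and~\eqref{eq:form3strip2}.

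The main obstacle is bookkeeping at the two ends, not the core computation. I would need to extract the three shapes from Figure~\ref{figmstrip} correctly and pin down \emph{exactly} which rows and columns the head and tail perturb, since the elementary correction factor --- and hence the answer, in particular the delicate ratio $(2^{2n-1}-1)/(2^{2n}-1)$ in~\eqref{eq:form3strip3} --- is sensitive to this; and I would have to keep the chain of identities among $B_{2n}$, $T_n$, $\sec$, $\tan$, and $\tanh(w/2)$, together with the substitution $z=-w^2$ and the signs in~\eqref{eq:viarelation}, perfectly consistent. Finally, I would remark that the Toeplitz shortcut is special to $m=3$: for larger $m$ the Aitken matrix is no longer essentially Toeplitz, which is presumably why the general theorem requires the transfer-operator and Schur-function machinery promised in the abstract, and why the $m=3$ formulas are worth recording separately.
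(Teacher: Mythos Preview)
Your approach is correct and takes a genuinely different route from the paper's. The paper never touches Aitken's formula for this theorem; instead it interprets $\dd(D_n)/|D_n|!$ as the volume of the order polytope, writes that volume as $\langle T_{\text{last}}\, C^{n-1}\, T_{\text{first}}\mathbf 1,\mathbf 1\rangle$ for the integral operator $(Cf)(x)=(1-x)\int_0^x yf(y)\,dy+x\int_x^1(1-y)f(y)\,dy$ on $L_2[0,1]$, diagonalises $C$ explicitly (eigenfunctions $\sqrt 2\sin(\pi kx)$, eigenvalues $1/(\pi k)^2$), and then sums the resulting Dirichlet-type series via Euler's evaluation of $\zeta(2n)$. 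Your route --- feed the shape into Aitken, observe that the matrix is Hessenberg--Toeplitz with unit subdiagonal, convert the determinant into the linear recurrence $d_n=\sum_{k\ge 0}(-1)^k a_k d_{n-1-k}$, and identify the generating function with $w/\sinh w$ --- is more elementary for $m=3$ and makes the Bernoulli numbers appear directly from the $\operatorname{csch}$ expansion rather than via $\zeta(2n)$. What the paper's method buys is uniformity in $m$: the same diagonalisation scheme runs for every strip width and is what drives Theorem~\ref{generalthm}. One small correction to your closing remark: for $m>3$ the Aitken matrix \emph{is} still Toeplitz in the body (the bulk entry is $1/(m+2(j-i))!$); what breaks is the \emph{Hessenberg} property, since there are now $\lfloor m/2\rfloor$ nonzero subdiagonals, so your cofactor expansion no longer reduces each minor to a single $d_{n-j}$. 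Your handling of \eqref{eq:form3strip1}--\eqref{eq:form3strip2} is only sketched, but the bordering argument is sound in principle; once the boundary rows are pinned down the correction factor is indeed a polynomial in $z$, and the $\tanh(w/2)$ identification goes through.
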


\begin{thm}[$4$-strip tableaux] \label{thm4strip}
\begin{eqnarray}
\dd\left(
\raisebox{-3.7ex}{\includegraphics[width=1.4cm]{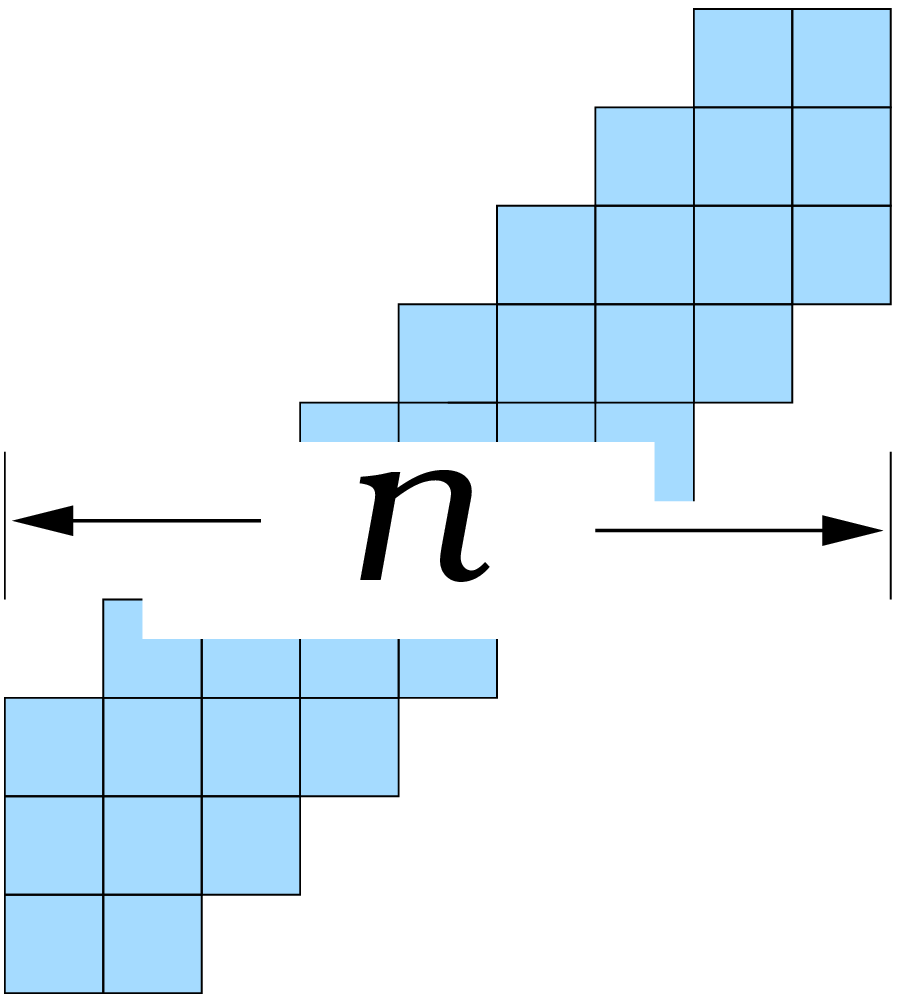}}
\right) 
 &\!\!\!=& 
\!\!(4n-2)!\left( \frac{T_n^2}{((2n-1)!)^2} - \frac{-E_{2n-2}E_{2n}}{(2n-2)!(2n)!}
\right), \label{eq:form4strip1}\\
\dd\left(
\raisebox{-3.9ex}{\includegraphics[width=1.4cm]{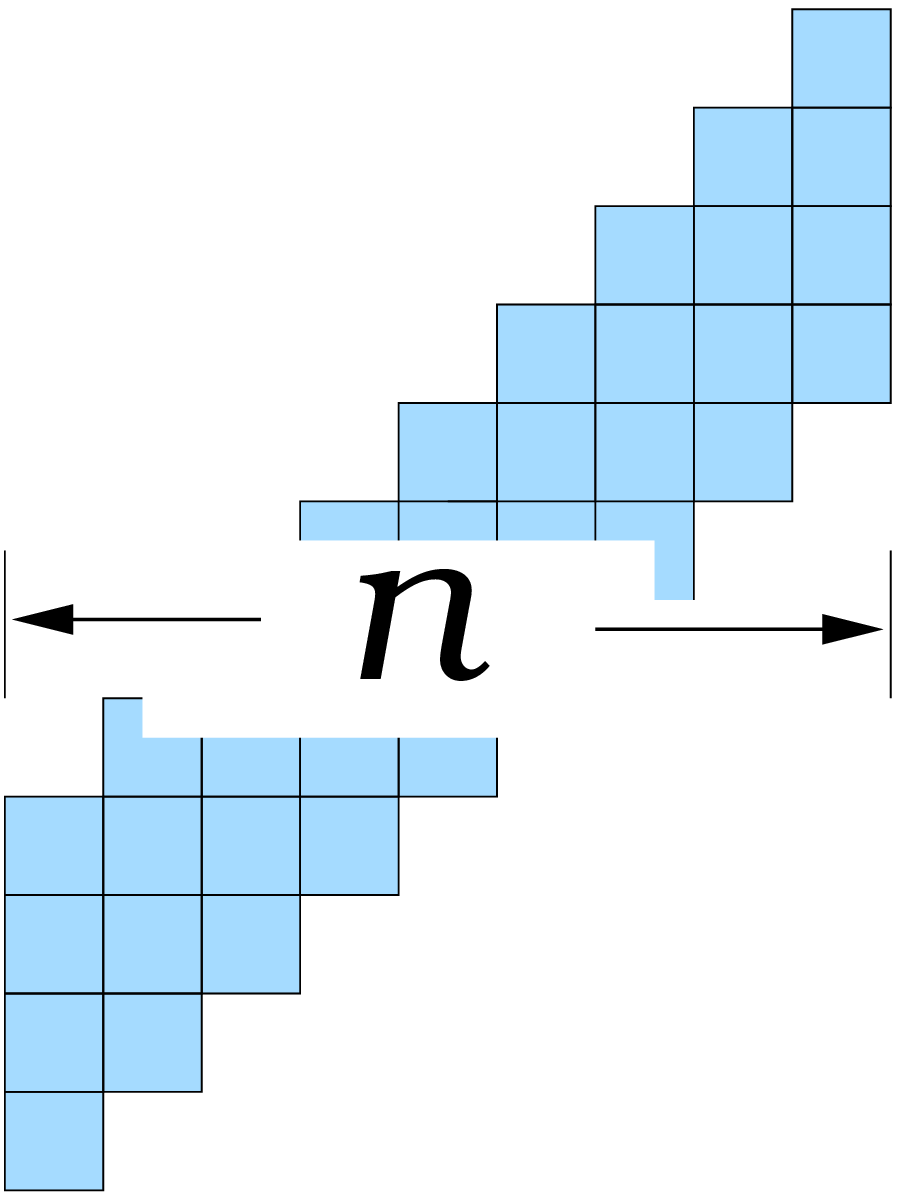}}
\right) 
&\!\!\!\!=& \!\!(4n)!\left(\frac{E_{2n}^2}{((2n)!)^2}-\frac{E_{2n-2}E_{2n+2}}{(2n-2)!(2n+2)!}\right).
\label{eq:form4strip2}
\end{eqnarray}
\end{thm}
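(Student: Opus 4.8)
The plan is to treat the 4-strip formulas as the first genuinely two-dimensional instance of the general transfer-operator machinery that underlies Theorem \ref{generalthm}, and to derive them by specializing that machinery to $m=4$. The starting point is the observation that counting SYT's of a fixed skew shape $D$ with $N=|D|$ cells is the same, up to the normalizing factor $N!$, as computing the volume of the order polytope associated with $D$: the set of monotone functions on the cells of $D$ taking values in $[0,1]$, with the monotonicity dictated by the rows and columns. For an $m$-strip diagram, slicing this polytope column by column exhibits its volume as an iterated integral, which in turn is an $n$-fold composition of a single integral (transfer) operator $\mathcal{T}$ acting on functions of $m-1$ real variables (the ``interface'' heights between consecutive columns of the body), plus boundary contributions from the head and the tail encoded as an initial and a final vector. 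Thus $\dd(D) = N!\,\langle v_{\text{tail}},\mathcal{T}^{\,n} v_{\text{head}}\rangle$ for appropriate $v_{\text{head}},v_{\text{tail}}$, and the asymptotics/exact evaluation are governed by the spectral decomposition of $\mathcal{T}$.

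The key steps, in order, are: (1) Write down $\mathcal{T}$ explicitly for $m=4$; here it acts on functions of three interface variables, but after accounting for the ordering constraints within a column it factors in a way that makes it diagonalizable by a product of one-dimensional trigonometric eigenfunctions --- this is exactly where the Euler and tangent numbers enter, since the one-variable transfer operator $(\mathcal{T}_1 g)(y)=\int_0^1 \mathbf{1}_{\{x<y\text{ or }x>y\}}\,g(x)\,dx$ (with the appropriate zig-zag constraint) has eigenfunctions $\cos$ and $\sin$ and eigenvalues governed by $\sec$ and $\tan$, reproducing \eqref{eq:andreformula}. (2) Identify the two relevant eigenvalues contributing to the $m=4$ head/tail pairings in the figures; because $m=4$ the ``interface'' polytope is two-dimensional in the essential direction and the answer is a bilinear combination of two one-dimensional spectral contributions, which is precisely why a product of two zig-zag quantities and a ``cross term'' $E_{2n-2}E_{2n}$ (resp.\ $E_{2n-2}E_{2n+2}$) appears rather than a single one. (3) Invoke the Schur-function volume identity alluded to in the abstract: the head and tail Young diagrams, read off in columns as in Figure \ref{figmstrip}, pair against the eigenfunctions to produce Schur polynomials evaluated at the spectral data, and for the two specific shapes in \eqref{eq:form4strip1}--\eqref{eq:form4strip2} these Schur polynomials collapse to the simple $2\times 2$ determinants $T_n^2-(\text{shift})$ and $E_{2n}^2-(\text{shift})$ visible on the right-hand sides. (4) Reconcile the combinatorial prefactor: the total cell count is $4n-2$ in the first case and $4n$ in the second (body contributes $4n$, the head/tail in the first diagram remove two cells), which fixes the $(4n-2)!$ and $(4n)!$ factors, and then match the remaining constants against the known normalizations of $E_n,T_n$ via \eqref{eq:updowneulertan} and \eqref{eq:viarelation}.

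The main obstacle I expect is Step (2)--(3): making rigorous the claim that for $m=4$ exactly two eigenvalues of $\mathcal{T}$ survive the pairing with the head and tail vectors, and that the corresponding ``residues'' assemble into a clean $2\times 2$ determinant rather than an unwieldy sum. This requires (a) verifying that $\mathcal{T}$ has a discrete spectrum with the product-trigonometric eigenfunctions one expects (an analytic point — self-adjointness or at least completeness of the eigensystem on the relevant function space), and (b) carrying out the Cauchy–Binet / Lindström–Gessel–Viennot-type bookkeeping that turns the sum over pairs of eigenfunctions into the determinant — this is where the Schur-function identity does the heavy lifting and where a sign/indexing error is most likely. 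Once the $m=4$ spectral data are pinned down, specializing Theorem \ref{generalthm} to the two shapes and simplifying is routine; in particular the identity $-E_{2n-2}E_{2n}\ge 0$ (note $E_{2n}$ alternates in sign) is just a bookkeeping remark, not a separate lemma. A useful consistency check at the end is to set $n=1$: \eqref{eq:form4strip1} should reduce to a single SYT count that one can verify directly from Aitken's formula \eqref{eq:aitkeneq}, and likewise for \eqref{eq:form4strip2}.
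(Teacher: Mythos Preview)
Your high-level architecture is right --- order polytope, transfer operator, spectral expansion --- and you correctly anticipate that specializing Theorem~\ref{generalthm} is one legitimate route. But two concrete points in your plan are off, and the second one is a genuine gap.

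First, the slicing. You propose to cut the polytope ``column by column'', giving a transfer operator on functions of $m-1=3$ variables. The paper instead cuts along \emph{diagonals} (see Figure~\ref{figfiltration4strip}), so that each slice through the body meets $k=m/2=2$ cells and the transfer operator $S_{2k}$ acts on $L_2(\Omega_2)$, $\Omega_2=\{x_1<x_2\}\subset[0,1]^2$. This is not a cosmetic change: the diagonal slicing is what makes the eigenfunctions antisymmetric in the $k$ variables and hence expressible as $k\times k$ determinants of one-variable trigonometric functions (here, $2\times 2$ determinants of cosines, equation~\eqref{eq:eigensystem}). A column-by-column transfer operator would not have this structure and there is no reason to expect it to ``factor'' as you describe.

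Second, and more seriously, your Step~(2) misidentifies where the $2\times 2$ determinant on the right-hand side comes from. It is \emph{not} the case that ``exactly two eigenvalues of $\mathcal{T}$ survive the pairing with the head and tail vectors''. Infinitely many eigenvalues $\lambda_{j_1,j_2}$ contribute, indexed by pairs $j_1>j_2>0$. The $2\times 2$ determinantal form arises because (i) each eigenfunction $\phi_{j_1,j_2}$ is itself a $2\times 2$ determinant, (ii) the head/tail vectors are, by Proposition~\ref{propschur}, also $2\times 2$ determinants in the slice variables, and (iii) Andr\'eief's formula (Lemma~\ref{andreieff}, a continuous Cauchy--Binet) collapses the double sum over $(j_1,j_2)$ into a $2\times 2$ determinant whose \emph{entries} are each one-index infinite sums --- and it is those entries that evaluate to $\bar A_{2n-2},\bar A_{2n},\bar A_{2n+2}$ and hence to the Euler/tangent combinations you see. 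So the mechanism you need is not ``truncate the spectrum to two terms'' but ``determinantal eigenfunctions $+$ Andr\'eief $\Rightarrow$ determinant of scalar spectral sums''. Without this, your Step~(3) cannot be carried out as stated.
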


\begin{thm}[$5$-strip tableaux] \label{thm5strip}
\begin{eqnarray*}
  \dd\left(
\raisebox{-4.0ex}{\includegraphics[width=1.6cm]{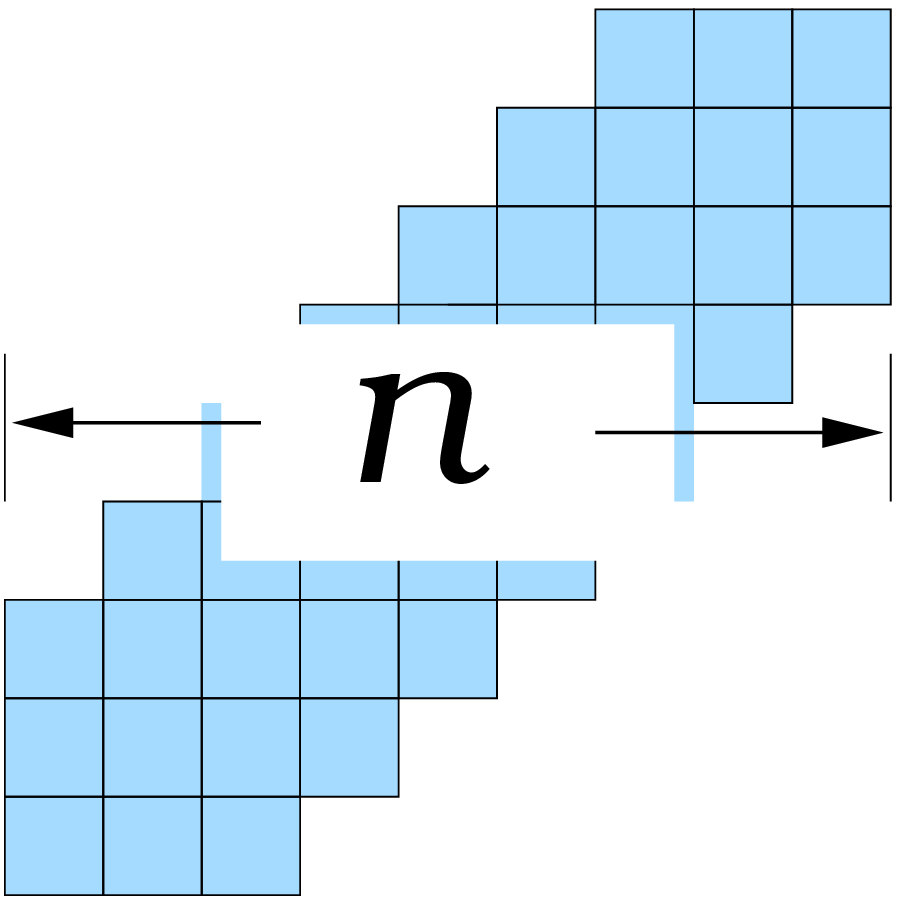}}
\right) &=& 
  \frac{(5n-6)! T_{n-1}^2}{((2n-3)!)^2 2^{4n-6}(2^{2n-2}-1)}. 
\end{eqnarray*}
\end{thm}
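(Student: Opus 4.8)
The plan is to obtain Theorem~\ref{thm5strip} as the $m=5$ instance of the transfer-operator method announced in the abstract, rather than through the determinant \eqref{eq:aitkeneq}, whose size would grow with the ambient partitions. Write $D$ for the skew shape in the statement and $N=|D|=5n-6$ for its number of cells. The starting point is Stanley's identity
\begin{equation*}
\dd(D)=N!\cdot\vol\bigl(\mathcal O(D)\bigr),
\end{equation*}
where $\mathcal O(D)\subseteq[0,1]^{D}$ is the order polytope of the cell poset of $D$, i.e.\ the set of maps $g\colon D\to[0,1]$ that are weakly increasing along rows and columns; the linear extensions of this poset label the top-dimensional simplices into which $\mathcal O(D)$ is dissected by the hyperplanes $\{g(c)=g(c')\}$, so computing $\dd(D)$ is reduced to computing one polytope volume.

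Next I would slice $\mathcal O(D)$ into its body columns. The value vector along one height-$5$ body column lies in the order simplex $\Delta=\{0\le x_1\le\dots\le x_5\le 1\}$, and the row-adjacencies linking two consecutive body columns cut out, inside $\Delta\times\Delta$, a shifted-domination region of the form $\{x_j\le y_{j+1}\}$ prescribed by the geometry of the strip. Fubini then rewrites the volume as
\begin{equation*}
\vol\bigl(\mathcal O(D)\bigr)=\bigl\langle\, \phi_{\mathrm{head}},\ \mathcal T^{\,\ell}\,\phi_{\mathrm{tail}}\,\bigr\rangle_{L^2(\Delta)},
\end{equation*}
where $\mathcal T$ is the transfer operator on $L^2(\Delta)$ obtained by integrating over that region, $\ell$ is essentially the number of body columns, and $\phi_{\mathrm{head}},\phi_{\mathrm{tail}}$ are the explicit boundary densities produced by integrating out the head and tail Young diagrams --- for the present ``$0\_0$'' shape these are low-degree polynomials, and it is precisely through $\phi_{\mathrm{head}},\phi_{\mathrm{tail}}$ that the Schur-function identity mentioned in the abstract enters.

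The core of the argument, in the spirit of Elkies, is the spectral analysis of $\mathcal T$: a Lindstr\"om--Gessel--Viennot / Karlin--McGregor determinantal structure for the interleaving kernel reduces matters to the one-variable operator $Kh(x)=\int_0^1\one_{\{x<y\}}h(y)\,dy$ on $L^2[0,1]$ (up to a reflection), whose ``up-then-down'' composition is the Sturm--Liouville problem $\psi''+\mu^{-1}\psi=0$, $\psi'(0)=\psi(1)=0$; hence the basic eigenfunctions are $\cos\bigl((2k+1)\pi x/2\bigr)$ with eigenvalues $\mu_k=\tfrac{4}{(2k+1)^2\pi^2}$ (the same data governing Andr\'e's $\tan x+\sec x$), and the spectrum of $\mathcal T$ is the corresponding family of minors/products of the $\mu_k$. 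Expanding $\langle\phi_{\mathrm{head}},\mathcal T^{\ell}\phi_{\mathrm{tail}}\rangle$ in this eigenbasis then presents $\vol(\mathcal O(D))$ as an absolutely convergent multiple series: a Schur polynomial in the powers $\mu_k^{\ell}$, weighted by the elementary rational pairings of $\phi_{\mathrm{head}}$ and $\phi_{\mathrm{tail}}$ against the trigonometric eigenfunctions.

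What remains --- and what I expect to be the main obstacle --- is to collapse that multiple series to the stated closed form $\dfrac{(5n-6)!\,T_{n-1}^2}{((2n-3)!)^2\,2^{4n-6}\,(2^{2n-2}-1)}$. Concretely, the series is a Schur function (equivalently a small determinant) built from Dirichlet-type sums $\sum_{k}(2k+1)^{-2r}$ and $\sum_k(-1)^k(2k+1)^{-(2r+1)}$; by $\zeta(2r)=\tfrac{(-1)^{r+1}B_{2r}(2\pi)^{2r}}{2(2r)!}$ together with \eqref{eq:viarelation} and \eqref{eq:updowneulertan}, the former are rescaled tangent numbers $T_r$, the latter rescaled Euler numbers $E_{2r}$, and the cyclotomic factors $2^{2r}-1$ arise from the Euler factor $1-2^{-s}$ attached to the odd denominators $2k+1$. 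One must then recognise that the resulting determinant in the $T_r$'s and $E_{2r}$'s factors, losing all but a single squared tangent number and one factor $2^{2n-2}-1$ --- a special evaluation of a Schur polynomial at the arithmetic progression $\{(2k+1)^{-2}\}_{k\ge 0}$, which is exactly the volume-in-terms-of-a-Schur-function identity advertised in the abstract, specialised to the head and tail of the $5$-strip. Once that collapse is in hand, tracking the factor $N!$ and the normalising constants coming from the $\phi_{\mathrm{head}},\phi_{\mathrm{tail}}$ pairings is routine bookkeeping, and the case $n=2$ --- where $D$ has $4$ cells and the formula predicts $\dd(D)=2$ --- serves as a numerical check that pins the constants down.
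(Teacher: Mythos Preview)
Your overall architecture --- order polytope, transfer operator, spectral expansion, evaluation via Bernoulli/Euler identities --- is exactly the paper's, but you have chosen the wrong one-variable building block, and with it the wrong Dirichlet series. The $5$-strip is the case $m=2k+1$ with $k=2$. When the paper slices along diagonals (not along height-$5$ body columns), the transfer passes alternately between $L_2(\Omega_2)$ and $L_2(\Omega_3)$, and the operator to diagonalise is the composition $AB$ on $L_2(\Omega_2)$. The associated boundary-value problem has \emph{Dirichlet} conditions on every face of $\Omega_k$ (already visible in the $3$-strip computation, where $f(0)=f(1)=0$), so the scalar eigenfunctions are $\sin(\pi j x)$ with eigenvalues $(\pi j)^{-2}$, and the $k=2$ eigenfunctions are the $2\times 2$ sine determinants with eigenvalues $(\pi^4 j_1^2 j_2^2)^{-1}$. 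Your Neumann--Dirichlet problem $\psi'(0)=\psi(1)=0$ with cosines $\cos\bigl((2j-1)\pi x/2\bigr)$ is the \emph{even}-width ($2k$-strip) building block; using it here would produce series over odd denominators and bring in Euler numbers, neither of which occurs in the $5$-strip formula. With the correct sines the eigenvalue sums are plain $\zeta(2r)$, hence Bernoulli numbers only, and the factor $2^{2n-2}-1$ in the statement comes not from an ``Euler factor at $2$'' but from the identity \eqref{eq:viarelation} relating $T_{n-1}$ to $B_{2n-2}$.

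A second, smaller gap concerns the ``collapse'' you anticipate as the hard step. In the paper this is not a special Schur evaluation at $\{(2k+1)^{-2}\}$; the Schur-function identity is used exactly where you first placed it, to produce the polynomial boundary data $\phi_{\mathrm{head}},\phi_{\mathrm{tail}}$ as determinants $\det(x_i^{L_j})/\prod L_j!$. The reduction of the double sum over $j_1>j_2>0$ to a $2\times 2$ determinant of single sums is Andr\'eief's integration identity $\int \det(f_i(x_l))\det(g_j(x_l))=k!\det\bigl(\int f_i g_j\bigr)$, applied once for the inner product with the eigenfunctions and once (with a discrete measure) for the sum over eigenvalue indices. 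For the ``$0\_0$'' head and tail ($L=(1,0)$, $M=(1,0)$) the resulting $2\times 2$ determinant in the $\zeta$-values is what factors cleanly into $T_{n-1}^2$ up to the stated constants; with the cosine eigensystem you propose, that determinant would not match the claimed right-hand side.
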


The formulas in Theorems \ref{thm3strip}, \ref{thm4strip},
\ref{thm5strip} are special cases of an infinite family of
formulas. To formulate them, we introduce some notation. Let $\bar{A}_n = A_n/n!$ (the volume of the $n$-th up-down polytope, see Section \ref{sectionelkies}). Let $$\tilde{A}_n = \frac{\bar{A}_n}{2^{n+1}-1}, \qquad
\hat{A}_n = \frac{(2^n-1)\bar{A}_n}{2^n (2^{n+1}-1)}.$$
For nonnegative integers $p,q,N$, denote
\begin{eqnarray*}
X_N(p,q) &=&  \sum_{i=0}^{\lfloor p/2\rfloor}
\sum_{j=0}^{\lfloor q/2\rfloor} \frac{(-1)^{i+j}\bar{A}_{N+2i+2j+1}}{(p-2i)!(q-2j)!} \\
& & + (-1)^{\frac{p+1}{2}}\sum_{j=0}^{\lfloor q/2\rfloor} \frac{(-1)^j \bar{A}_{N+p+2j+1}}{(q-2j)!}
1_{[p\text{ odd}]}   \\  & & +
(-1)^{\frac{q+1}{2}}\sum_{i=0}^{\lfloor p/2\rfloor} \frac{(-1)^i \bar{A}_{N+q+2i+1}}{(p-2i)!}
1_{[q\text{ odd}]}
\\ & & + (-1)^{\frac{p+q}{2}+1} \bar{A}_{N+p+q+1} 1_{[p,q\text{ odd}]}, \\ \\
Y_N(p,q) &=&  \sum_{i=0}^{\lfloor p/2\rfloor}
\sum_{j=0}^{\lfloor q/2\rfloor} \frac{(-1)^{i+j}\hat{A}_{N+2i+2j+1}}{(p-2i)!(q-2j)!} \\
& & + (-1)^{\frac{p}{2}}\sum_{j=0}^{\lfloor q/2\rfloor} \frac{(-1)^j \tilde{A}_{N+p+2j+1}}{(q-2j)!}
1_{[p\text{ even}]}   \\  & & +
(-1)^{\frac{q}{2}}\sum_{i=0}^{\lfloor p/2\rfloor} \frac{(-1)^i \tilde{A}_{N+q+2i+1}}{(p-2i)!}
1_{[q\text{ even}]}
\\ & & + (-1)^{\frac{p+q}{2}} \hat{A}_{N+p+q+1} 1_{[p,q\text{ even}]},
\end{eqnarray*}
(here $1_{[x]}$ is 1 if $x$ is true, 0 otherwise). Then we have

\begin{thm} \label{generalthm}
Let $D$ be an $m$-strip diagram as in Figure \ref{figmstrip}. Denote the head Young diagram by $(\lambda_1,\lambda_2,\ldots,\lambda_k)$ and the tail Young diagram by
$(\mu_1,\mu_2,\ldots,\mu_k)$ (where $k=\lfloor m/2\rfloor$). Define the associated numbers $L_i = \lambda_i+k-i$ and $M_i= \mu_i+k-i$, $i=1,\ldots,k$.
If the diagram has a total of $n$ columns, then the number of SYT's of shape $D$ is given by
\begin{equation}\label{eq:ifeven}
\dd(D) = (-1)^{\binom{k}{2}}
|D|!\ 
det\bigg( X_{2n-m+1}(L_i,M_j) \bigg)_{i,j=1,\ldots,k}
\end{equation}
if $m$ is even, or by
\begin{equation}\label{eq:ifodd}
\dd(D) = (-1)^{\binom{k}{2}} |D|!\ 
det\bigg( Y_{2n-m+1}(L_i,M_j) \bigg)_{i,j=1,\ldots,k}
\end{equation}
if $m$ is odd.
\end{thm}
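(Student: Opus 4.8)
The plan is to combine two classical ingredients -- the Aitken--Feit determinantal formula \eqref{eq:aitkeneq} for $\dd(D)$ and an integral/transfer-operator computation of a certain polytope volume -- and then recognize the resulting integrals as entries of the matrix $\big(X_N(L_i,M_j)\big)$ or $\big(Y_N(L_i,M_j)\big)$. Concretely, an $m$-strip SYT corresponds to a linear extension of a poset that is a ``thickened'' diagonal zig-zag of width $m$; the number of such linear extensions is $|D|!$ times the volume of the associated order polytope $\mathcal{P}(D)\subset\R^{|D|}$. So the first step is to set up this order polytope explicitly, slicing it by the $n$ hyperplanes that separate consecutive columns of the body: each slice carries an $m$-dimensional ``profile'' vector recording the values in that column, and the polytope volume becomes an iterated integral $\int K^{\,n-1}(\cdot,\cdot)\,d(\text{head})\,d(\text{tail})$ where $K$ is the transfer (Markov) operator sending one column profile to the next compatible one. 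This is the Elkies-type setup referred to in the abstract, and I would carry it out so that $K$ acts on functions of the $m$-variable simplex of monotone column-profiles.

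The second step is to diagonalize, or at least compute the needed matrix elements of, the transfer operator $K$. The key point is that $K$ is (up to normalization) the operator ``integrate over all ways to interleave two monotone sequences,'' whose iterates are governed by the zig-zag exponential generating function $\tan x+\sec x$; the eigenfunctions are trigonometric (sine/cosine) functions, exactly as in Elkies' treatment of the $m=2$ case, and the spectrum explains the appearance of the rescaled numbers $\bar A_n$, and after the eigenvalue-shifts coming from the $2^{n+1}-1$ and $(2^n-1)/(2^n(2^{n+1}-1))$ denominators, the numbers $\tilde A_n$ and $\hat A_n$. Plugging the head and tail Young diagrams $\lambda,\mu$ in as boundary data, the volume of $\mathcal P(D)$ becomes a sum over eigenmodes whose coefficients are Schur-function-like determinants; here is where the identity ``volume of a Markovian polytope $=$ a Schur function'' (again promised in the abstract) enters, converting the boundary contribution of $\lambda$ (resp. $\mu$) into the determinant structure with rows indexed by $L_i=\lambda_i+k-i$ (resp. columns by $M_j=\mu_j+k-j$). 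The parity split between \eqref{eq:ifeven} and \eqref{eq:ifodd} reflects whether the middle of the $m$-strip sits on a box-center or a box-edge, i.e.\ whether the relevant trigonometric eigenfunctions are the ``tangent'' family or the ``secant'' family, which is precisely the even/odd alternative in the definitions of $X_N$ versus $Y_N$.

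The third step is purely computational bookkeeping: expand the chosen eigenfunction integrals (products of the form $\int_0^1 \sin/\cos$ against a monomial $t^{L_i}/L_i!$ coming from the rotated head diagram, and similarly for $M_j$) using repeated integration by parts. Each integration by parts peels off one factorial and flips a sign, producing exactly the finite double sum $\sum_{i}\sum_{j}(-1)^{i+j}\bar A_{\cdot}/((p-2i)!(q-2j)!)$ together with the boundary terms that survive only when $p$ (resp.\ $q$) has the ``wrong'' parity -- this is the origin of the $1_{[p\text{ odd}]}$, $1_{[q\text{ odd}]}$, $1_{[p,q\text{ odd}]}$ correction terms in $X_N$ and the analogous even-parity terms in $Y_N$. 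One then collects the overall sign, which works out to $(-1)^{\binom{k}{2}}$ from reordering the eigenmodes into the determinant, multiplies by $|D|!$ to undo the volume normalization, and the formulas \eqref{eq:ifeven}--\eqref{eq:ifodd} drop out. As a sanity check one should verify that the $k=1$ cases reproduce Theorems \ref{thm3strip} and \ref{thm5strip}, and that $k=2$ reproduces Theorem \ref{thm4strip}; in particular the single-entry $m=2$ case must recover Andr\'e's formula \eqref{eq:andreformula} via the zig-zag numbers, which is the base case of the whole transfer-operator machine.

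The main obstacle, I expect, is Step 2: correctly identifying the transfer operator's action on the full $m$-dimensional column-profile simplex and pushing the boundary data for \emph{arbitrary} head and tail Young diagrams $\lambda,\mu$ through it while keeping the Schur/determinant structure intact. For $m=2$ the column profile is one-dimensional and the operator is a rank-one-ish trigonometric kernel (Elkies' case); for general $m$ one must understand how the zig-zag kernel acts on the order polytope of the poset ``two interleaved chains of length $k$ plus a middle vertex'' and show its spectral decomposition still closes on the trigonometric functions -- equivalently, proving the ``volume $=$ Schur function'' identity in the exact generality needed here. Getting the indexing shifts $L_i=\lambda_i+k-i$, the argument $N=2n-m+1$, and the parity-dependent substitution of $\bar A$, $\tilde A$, $\hat A$ all consistent is the delicate heart of the argument; everything after that is the mechanical integration-by-parts expansion of Step 3.
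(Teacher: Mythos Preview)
Your outline is essentially the paper's own proof: order polytope $\to$ transfer operator on diagonal slices $\to$ explicit diagonalization (Theorems \ref{thmdiag2k}, \ref{thmdiag2kp1}) $\to$ Schur-type boundary data (Proposition \ref{propschur}) $\to$ integration by parts (Appendix A). Two small corrections: drop the reference to Aitken--Feit \eqref{eq:aitkeneq}, which is never used; and the filtration is by \emph{diagonals}, so the profile lives in the $k$-simplex $\Omega_k$ with $k=\lfloor m/2\rfloor$, not in an $m$-dimensional column simplex (your later ``two interleaved chains of length $k$'' is the right picture).

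The one step you leave genuinely unspecified is how the eigen-expansion over $k$-tuples $j_1>\cdots>j_k>0$ collapses to a single $k\times k$ determinant. This is Andr\'eief's identity (Lemma \ref{andreieff}), and it is used \emph{twice}: first to write $\langle\psi_\lambda,\phi_{j_1,\ldots,j_k}\rangle=2^{k/2}\det\big(I(L_s,j_r)\big)_{r,s}$, and then again, with the discrete measure $\sum_{j\ge1}(2(-1)^{j+1}/\pi(2j-1))^{2n-m+1}\delta_j$, to turn $\sum_{\mathbf j}\lambda_{\mathbf j}^{2n-m+1}\det(\cdot)\det(\cdot)$ into $\det\big(\sum_j\cdots\big)$, whose entries are then identified with $X_{2n-m+1}(L_s,M_r)$ via \eqref{eq:verify}. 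Without naming this mechanism your Step 2 does not actually produce a determinant from the spectral sum. The sign $(-1)^{\binom{k}{2}}$ comes from the eigenvalue formula for $\lambda_{j_1,\ldots,j_k}$ in Theorem \ref{thmdiag2k}, not from any reordering.
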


Note that $X_N(p,q)$ and $Y_N(p,q)$ are linear combinations of the zig-zag numbers $A_{N+1},A_{N+2},\ldots,A_{N+p+q+1}$. Thus, the theorem represents $\dd(D)$ for an $m$-strip diagram as a polynomial in the numbers $A_n$, whose complexity depends on the thickness $m$ of the strip but not on the number of columns $n$. For example, for the diagram $D$ in equation \eqref{eq:form4strip2} we have $(\lambda_1,\lambda_2)=(\mu_1,\mu_2)=(1,0)$, so we get
\begin{eqnarray*}\dd(D) &=&  -(4n)! \det\left( \begin{array}{ll} 
X_{2n-3}(0,0) & X_{2n-3}(0,2) \\ X_{2n-3}(2,0) & X_{2n-3}(2,2) \end{array}\right) \\ &=&
 -(4n)! \det\left( \begin{array}{ll} 
 \bar{A}_{2n-2} & \frac12 \bar{A}_{2n-2}-\bar{A}_{2n}   \\ 
\frac12 \bar{A}_{2n-2}-\bar{A}_{2n} & \frac14 \bar{A}_{2n-2}- \bar{A}_{2n} + \bar{A}_{2n+2}
\end{array}\right),
\end{eqnarray*}
which simplifies to give \eqref{eq:form4strip2}. By comparison, trying to use \eqref{eq:aitkeneq} to compute $\dd(D)$ would result in a daunting-looking determinant  of order $n+3$, whose relation to the analogous determinants for the $A_n$'s is unclear.

The following theorem  gives a direct combinatorial meaning to $X_N(p,q)$. The first part is a simple corollary to Theorem \ref{generalthm}, and the second part does not follow from Theorem \ref{generalthm} but can be proved using the same techniques -- see Section \ref{sectionproofgeneral}.

\begin{thm} \label{combmeaning}
For a permutation $\sigma\in S_n$ and $1\le i\le n-1$, we say that $\sigma$ has a descent at $i$ if $\sigma(i)>\sigma(i+1)$. For integers $n,p,q\ge 0$, denote by $\alpha_n$ the number of permutations in 
$S_{2n+p+q}$ whose set of descents is equal to
\begin{multline*} \{ 1,2,3,\ldots,p, p+1,p+3,p+5,\ldots,p+2n-3,p+2n-1, \\ 
p+2n,p+2n+1\ldots,p+2n+q-1 \}.
\end{multline*}
Denote by $\beta_n$ the number of permutations in $S_{2n+p+q}$ whose descent set is equal to
$$\{ 1,2,3,\ldots,p, p+1,p+3,p+5,\ldots,p+2n-3,p+2n-1\}
$$
(equivalently, $\alpha_n$ and $\beta_n$ count SYT's of the shapes shown in Figure \ref{figbelow} below).
Then we have
\begin{eqnarray}
\alpha_n &=& (2n+p+q)! X_{2n-1}(p,q), \label{eq:alphan} \\
\beta_n &=& (2n+1+p+q)! X_{2n}(p,q). \label{eq:betan}
\end{eqnarray}
\end{thm}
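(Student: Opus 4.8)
The plan is to deduce this from Theorem~\ref{generalthm} together with a direct identification of the relevant skew shapes. First I would explain the translation between SYT's and descent sets: a permutation $\sigma\in S_N$ with descent set $S\subseteq\{1,\ldots,N-1\}$ corresponds bijectively (via the standard ribbon/skew-shape construction) to a pair consisting of a reading word and an SYT of the ribbon shape whose column lengths are read off from the gaps between consecutive elements of $S$; in particular the number of permutations with a prescribed descent set $S$ equals $\dd(R_S)$ for the ribbon (border strip) $R_S$. So I would first check that the two descent sets in the statement describe exactly the ribbon shapes depicted in Figure~\ref{figbelow}, and then observe that these ribbons are precisely the $m$-strip diagrams of Figure~\ref{figmstrip} in the case $m=2$, $k=1$ --- the body being the run of $n$ single-box columns coming from the arithmetic-progression part $p+1,p+3,\ldots,p+2n-1$ of the descent set, with a head of $p$ extra boxes and a tail of $q$ extra boxes. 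Identifying $\lambda_1=\lambda$, $\mu_1=\mu$ appropriately, $L_1=p$, $M_1=q$ (since $k=1$, $L_i=\lambda_i$, $M_i=\mu_i$), and counting the total number of columns correctly.

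With that dictionary in place, the first identity $\alpha_n=(2n+p+q)!\,X_{2n-1}(p,q)$ is almost immediate: apply Theorem~\ref{generalthm} with $m=2$ (even case, equation~\eqref{eq:ifeven}), $k=1$, so the determinant collapses to the single entry $X_{2n-m+1}(L_1,M_1)=X_{2n-1}(p,q)$, the sign $(-1)^{\binom{k}{2}}=(-1)^{0}=1$, and $|D|!=(2n+p+q)!$ once I've checked that the $m$-strip with body of $n$ columns of height $2$, head $\lambda$, tail $\mu$ has $|D|=2n+p+q$ cells. One has to be a little careful matching the number of body columns in Figure~\ref{figmstrip} to the parameter $n$ appearing in $\alpha_n$: the arithmetic-progression block $p+1,p+3,\ldots,p+2n-1$ has $n$ terms and contributes $n$ height-$2$ columns, so the bookkeeping should work out, but I'd verify it against the small cases worked out after Theorem~\ref{generalthm} (e.g.\ equation~\eqref{eq:form4strip2}) as a sanity check.

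The second identity, $\beta_n=(2n+1+p+q)!\,X_{2n}(p,q)$, is the part that "does not follow from Theorem~\ref{generalthm}," because the shape $R$ corresponding to the second descent set is a ribbon with an odd-height feature: the block $p+1,p+3,\ldots,p+2n-1$ followed immediately by the tail portion $\{p+2n,\ldots\}$ being absent means the last column of the "body" merges with the tail in a way that produces a strip of thickness that is not uniformly $2$ --- effectively an $m$-strip with $m=2$ but with a half-integer adjustment, or equivalently a $3$-strip-like truncation. So here I would not invoke Theorem~\ref{generalthm} as a black box but instead re-run the transfer-operator computation from Section~\ref{sectionproofgeneral} for this specific family of shapes. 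Concretely: express $\dd(R)$ via the Aitken determinant~\eqref{eq:aitkeneq}, which for a ribbon is a single-banded determinant that telescopes; alternatively, and more in the spirit of the paper, set up the volume of the associated order polytope as a transfer-operator trace and expand in the eigenfunction basis (the one yielding the $\bar A_n$, $\tilde A_n$, $\hat A_n$ coefficients), reading off the coefficient combination that is exactly $X_{2n}(p,q)$.

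The main obstacle I anticipate is the second part --- specifically, getting the transfer-operator / polytope-volume computation to reproduce the precise linear combination defining $X_N(p,q)$, including all the sign patterns $(-1)^{(p+1)/2}$, the indicator functions $1_{[p\text{ odd}]}$ etc., and the shift $N=2n$ versus $N=2n-1$. The combinatorial dictionary (descent sets $\leftrightarrow$ ribbons) and the $k=1$ specialization of Theorem~\ref{generalthm} are routine; the real work is verifying that the "odd-length" ribbon in the $\beta_n$ case is handled by the same eigenfunction expansion with $X$ (rather than $Y$) coefficients, which amounts to checking that the relevant boundary conditions of the order polytope match those already analyzed for even $m$. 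I would also double-check the edge cases $p=0$ and/or $q=0$ (where the head or tail is empty and several indicator terms vanish) against the $3$-strip formulas in Theorem~\ref{thm3strip}, since those provide an independent check of both the $X_N$ formula and the factorial normalizations.
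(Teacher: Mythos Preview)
Your plan matches the paper's own sketch: the first identity is exactly the $m=2$, $k=1$ case of Theorem~\ref{generalthm}, and the second requires re-running the transfer-operator argument directly. The paper makes the second part concrete by observing that both formulas follow at once from the single identity
\[
X_N(p,q)=\big\langle T^N(x^p/p!),\,x^q/q!\big\rangle,
\]
where $T$ is Elkies's one-dimensional operator~\eqref{eq:elkiesop}; this is then connected to $\alpha_n$ and $\beta_n$ via Proposition~\ref{propmarkov} and the trivial $k=1$ case of Proposition~\ref{propschur}.

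Two small corrections to your picture of the $\beta_n$ case. First, the $\beta_n$ shape is still a $2$-strip ribbon, not a ``$3$-strip-like truncation'': what changes is that the tail is attached to the \emph{vertical} side of the body's terminal triangle rather than the horizontal side (this is exactly the variant mentioned in comment~1 of Section~\ref{sectioncomments}, with the right-hand shape in Figure~\ref{figbelow} as the example). The relevant transfer operator is therefore still the one-dimensional $T$, and the eigenfunction expansion only produces the numbers $\bar A_n$. Second, and consequently, the coefficients $\tilde A_n,\hat A_n$ and the $3$-strip formulas of Theorem~\ref{thm3strip} are not the right sanity check: those pertain to $Y_N(p,q)$ and the odd-$m$ case, whereas here everything stays within $X_N$. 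A correct small check is $p=q=0$, where $X_N(0,0)=\bar A_{N+1}$ and the formulas reduce to the up-down count $A_{N+1}$.
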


Similarly, one can also give a combinatorial meaning to $Y_N(p,q)$ in terms of the number of SYT's for certain $3$-strip diagrams. We leave the precise formulation of this statement to the reader, as an exercise in implementing the techniques of Section \ref{sectionproofgeneral}.

\begin{figure}[h!]
\begin{center}
\begin{tabular}{cc}
{\includegraphics[width=.3\textwidth]{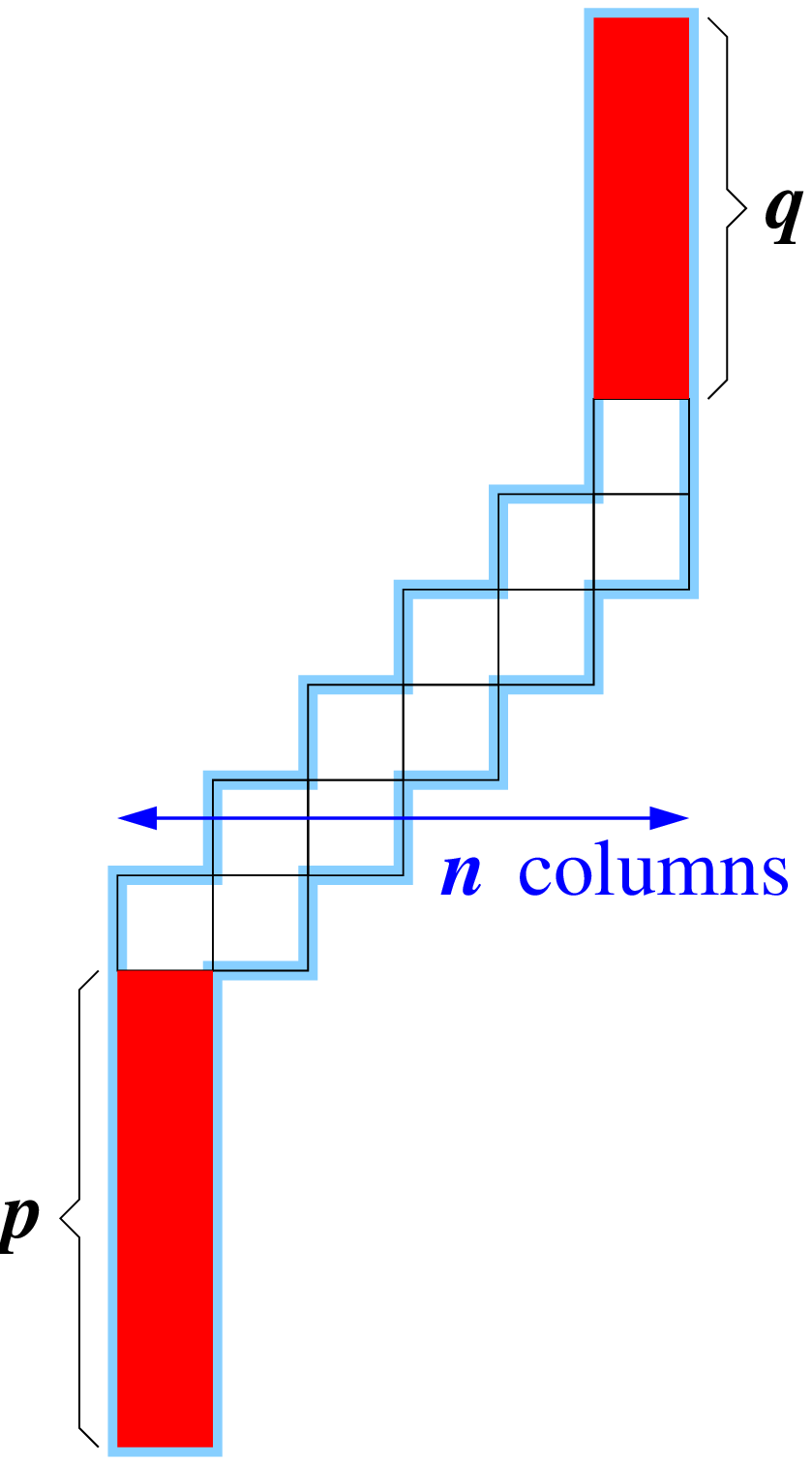}} &
{\includegraphics[width=.35\textwidth]{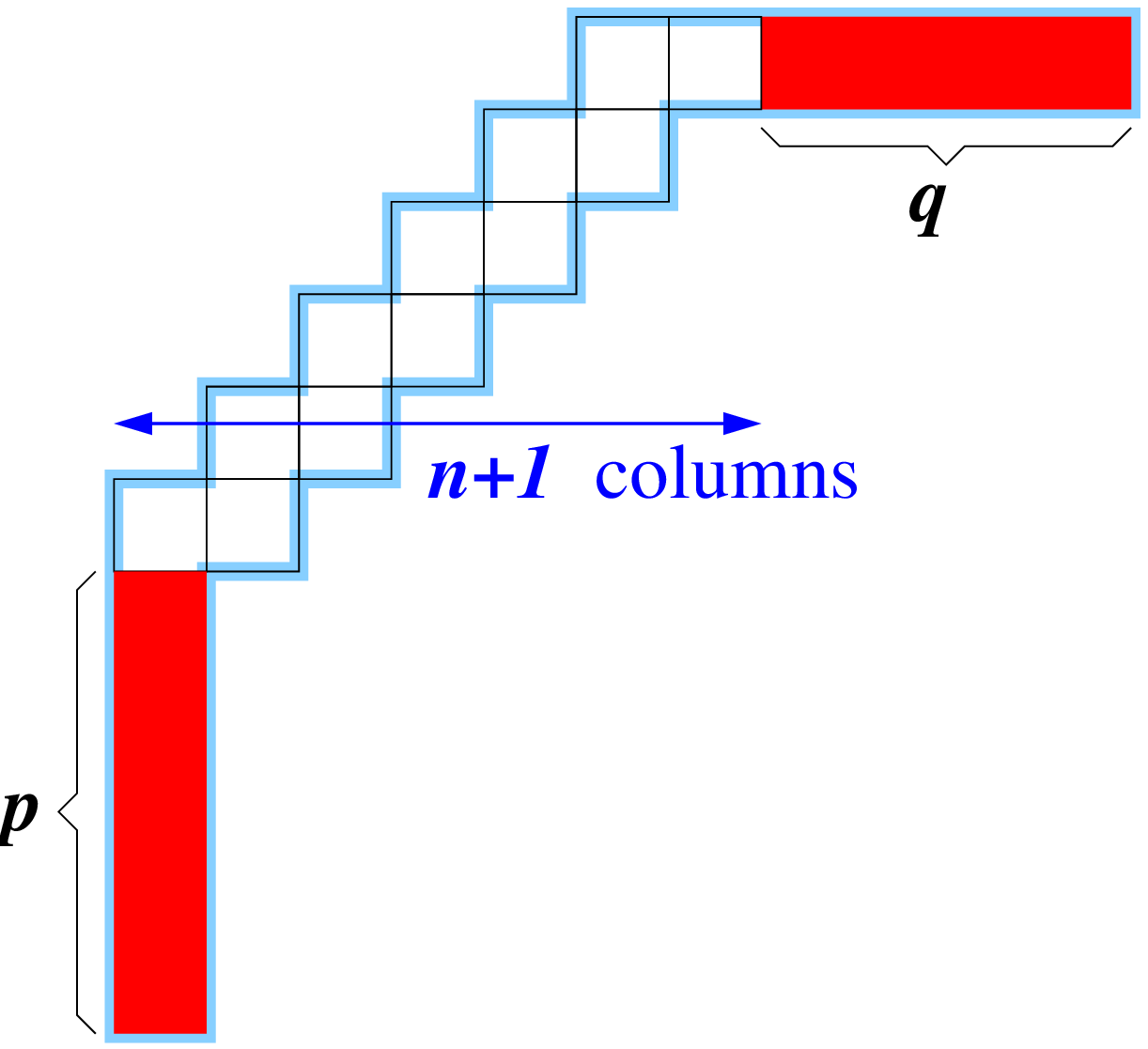}} 
\end{tabular}

\caption{The shapes whose standard Young tableaux are enumerated by $\alpha_n$ and $\beta_n$, respectively. \label{figbelow}}
\end{center}
\end{figure}

In the next sections we develop the tools that will be used to prove
the theorems above. The idea is to use \emph{transfer operators}. This
generalizes the transfer operator approach to up-down permutations
introduced by Elkies \cite{elkies}. Another tool is a geometric-combinatorial identity that expresses the volume of a certain polytope in terms of a Schur symmetric function, and can be thought of as a continuous analogue of a known result on the enumeration of certain shifted Young tableaux.
This identity, Proposition \ref{propschur}, is the subject of Section \ref{sectionschur}.

The understanding of $m$-strip
tableaux that we gain using our analysis of the transfer
operators gives more, and is perhaps more important, than just the proof of the formulas given
above. In fact, more general formulas could be derived, and even more
results such as an understanding of random $m$-strip tableaux of given
shape. Thus, we have the following theorem.

\begin{thm}
The $m$-strip tableaux model is an \emph{exactly solvable} model in the statistical mechanics sense (see Baxter \cite{baxterbook}). That is, the transfer operators can be explicitly diagonalized.
\end{thm}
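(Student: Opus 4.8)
The plan is to make the transfer-operator formalism underlying Theorem \ref{generalthm} precise and then to exhibit its eigendata in closed form. First I would encode an $m$-strip tableau — or, passing to the continuum as in Section \ref{sectionelkies}, the order polytope of the zigzag-bordered strip poset — as a word in a transfer operator read off column by column: each column of $m$ boxes contributes a ``slice'' whose continuous avatar is a point of a fixed $k$-dimensional region $\Delta\subset\R^k$ (with $k=\lfloor m/2\rfloor$), and the interlacing forced between two consecutive columns by the row- and column-increasing conditions is implemented by an integral operator $\mathcal T_m$ on $L^2(\Delta)$ with an explicit piecewise-polynomial kernel. The head and tail Young diagrams enter only through two ``boundary vectors'' $u_\lambda,u_\mu$ determined by the numbers $L_i,M_j$, against which a power of $\mathcal T_m$ is paired, so that $\dd(D)=(-1)^{\binom{k}{2}}|D|!\,\langle u_\lambda,\mathcal T_m^{\,n}u_\mu\rangle$; the volume identity of Proposition \ref{propschur} is exactly what identifies these boundary functionals with (trigonometric) Schur-type determinants.

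The second and central step is to recognize $\mathcal T_m$ as a \emph{free-fermionic} transfer operator. Because the interlacing conditions between consecutive columns of a tableau are precisely the non-crossing conditions of the Lindström--Gessel--Viennot lemma, $\mathcal T_m$ preserves the antisymmetric subspace $\bigwedge^{k}L^2(I)$ of a $k$-fold tensor power of a one-dimensional space $L^2(I)$, $I=[0,1]$, and on that subspace it coincides with the $k$-th exterior power $\bigwedge^{k}\mathcal T_1$ of a single one-dimensional operator $\mathcal T_1$. Here $\mathcal T_1$ is, up to a reflection twist, an iterate of the Volterra operator $(\mathcal T_1 g)(x)=\int_0^x g(t)\,dt$ on $L^2[0,1]$ — essentially Elkies' operator from \cite{elkies}. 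This reduction is the conceptual heart of exact solvability: it says the $m$-strip model is a system of $k$ non-interacting fermions, and that is precisely the sense in which the transfer operators ``can be explicitly diagonalized.''

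The third step is the one-particle diagonalization, which is classical: $\mathcal T_1$ is Hilbert--Schmidt, hence compact, its eigenfunctions are the trigonometric functions $\psi_j(x)=\cos\big((2j+1)\tfrac{\pi}{2}x\big)$ or $\sin\big(j\pi x\big)$ (the precise list, and which conditions at $0$ and $1$ are imposed, being dictated by the parity of $m$ and by the two straight walls of the body), with eigenvalues the reciprocals $\lambda_j$ of the corresponding half-integer or integer multiples of $\pi$; completeness is completeness of the Fourier system. Combining this with the exterior-power structure, the eigenfunctions of $\mathcal T_m$ are the Slater determinants $\det\big(\psi_{j_l}(x_r)\big)_{l,r=1}^{k}$ indexed by strictly increasing tuples $j_1<\dots<j_k$, with eigenvalues $\prod_{l=1}^{k}\lambda_{j_l}$, and these exhaust the spectrum because Slater determinants span $\bigwedge^{k}L^2(I)$. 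Expanding $u_\mu$ in this eigenbasis, applying $\mathcal T_m^{\,n}$ termwise, and pairing against $u_\lambda$ — both overlaps now being evaluated by Proposition \ref{propschur} — reproduces, and explains structurally, the determinantal formulas \eqref{eq:ifeven}--\eqref{eq:ifodd}.

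I expect the main obstacle to be the careful bookkeeping that makes the free-fermion reduction rigorous in the presence of the strip's \emph{walls} and of the \emph{head and tail}: one must check that the antisymmetrization supplied by Lindström--Gessel--Viennot is compatible with the specific Dirichlet/Neumann-type conditions that the two sides of the body impose on the one-particle functions, and that the boundary functionals $u_\lambda,u_\mu$ genuinely lie in the closed span of the eigenbasis, so that the termwise pairing above is legitimate. Verifying the one-particle eigenfunction equation, including the boundary terms produced by integration by parts, and pinning down the normalization constants $\lambda_j$, is routine but must be done with care; everything else — compactness, completeness, and the passage from the combinatorial operator to the continuous one — is standard once the kernel of $\mathcal T_m$ has been written down explicitly.
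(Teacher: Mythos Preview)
Your proposal is correct and lands on exactly the eigendata the paper records in Theorems \ref{thmdiag2k} and \ref{thmdiag2kp1}: Slater-determinant eigenfunctions built from one-particle cosines (even $m$) or sines (odd $m$), with product eigenvalues. The route, however, is framed differently. The paper proceeds analytically: starting from the transfer operator on the simplex $\Omega_k$, it derives a boundary-value problem (a mixed Dirichlet/Neumann PDE), then solves it by extending functions antisymmetrically to a box via conditions like \eqref{eq:symmetry4}, where separation of variables applies; completeness is argued by conjugating to a manifestly compact self-adjoint tensor-product operator $D$ on that antisymmetric subspace (see \eqref{eq:op_anti} and the paragraph following it). Your free-fermion argument short-circuits the PDE step by recognizing at the outset, via Lindstr\"om--Gessel--Viennot (really Karlin--McGregor in the continuum), that the interlacing kernel is the antisymmetrization of a product kernel, so the transfer operator is literally $\bigwedge^{k} T$ with $T$ Elkies' operator \eqref{eq:elkiesop} for even $m$, or the operator $C=BA$ of Section \ref{sec3strip} for odd $m$. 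This is cleaner and more structural --- it \emph{explains} why the eigenfunctions are determinants rather than first guessing and then verifying them --- but mathematically it is the same reduction: the paper's antisymmetric extension and conjugation to $D$ \emph{is} your exterior-power identification, reached from the PDE side. One wording issue to fix: you describe the filtration as ``column by column,'' but the slices that make each step $k$-dimensional and the one-particle operator equal to Elkies' are the paper's \emph{diagonal} cuts (Figures \ref{figfiltration3strip}, \ref{figfiltration4strip}, \ref{filtrationfig}), not the vertical columns of $m$ boxes; make sure that is what you intend, and adjust the exponent on $\mathcal T_m$ accordingly (the paper gets $S_{2k}^{\,2n-m+1}$, not $\mathcal T_m^{\,n}$).
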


For more details, see Theorems \ref{thmdiag2k}, \ref{thmdiag2kp1} and comment 1 in Section~\ref{sectioncomments}.

\medskip
To conclude this introduction, we note that the $m$-strip tableaux model is related to the \emph{bead model} studied by Boutillier \cite{boutillier}: The Gibbs measures he constructs seem to describe the limiting distribution of uniform random elements of the order polytope (see Section 4 for the definition) associated with an $m$-strip diagram. The $m$-strip tableaux model is also related to the \emph{square ice} model from statistical physics (see \cite[Chapter 8]{baxterbook}), and our explicit diagonalization of the transfer operators can be thought of as a degenerate case of the so-called  \emph{Bethe ansatz} used in the solution of that model.

\subsection{Acknowledgements}
Theorem \ref{thm3strip}, from which this project evolved, was first discovered and proved when the second-named author was visiting Microsoft Research in 2004, in collaboration with Henry Cohn, Assaf Naor and Yuval Peres. We are also grateful to George Andrews, Omer Angel, Cedric Boutillier, Ehud Friedgut, Ander Holroyd, Rick Kenyon, Christian Krattenthaler, Andrei Okounkov, Igor Pak, Andrea Sportiello, B\'alint Vir\'ag and Doron Zeilberger for helpful discussions.

\section{The transfer operator method of Elkies \label{sectionelkies}}

Noam Elkies \cite{elkies} proposed the following approach to proving \eqref{eq:andreformula}. Define the
$n$-th \emph{up-down polytope} by
$$ P_n = \{ (x_1,\ldots,x_n) \in [0,1]^n\ :\ x_1\le x_2\ge x_3\le x_4\ge \ldots \ \}. $$
Computing $\vol(P_n)$, the volume of $P_n$, in two ways, one first observes that this is simply related to the number $A_n$ of up-down permutations by
$$ \vol(P_n) = \frac{A_n}{n!}. $$
This is because (in probabilistic language) $n$ independently drawn uniform random variables in $[0,1]$ will be in $P_n$ with probability $A_n/n!$, since for each up-down permutation $\sigma$ the probability is $1/n!$ that the random variables will have order structure $\sigma$, and these events have measure 0 pairwise intersections.

On the other hand, by computing the volume as an iterated integral, namely
$$ \vol(P_n) = \int_0^1 dx_1 \int_{x_1}^1 dx_2 \int_0^{x_2} dx_3 \int_{x_3}^1 dx_4 \ldots, $$
upon some more simple manipulation one obtains the following convenient representation in terms of linear operators on the function space $L_2[0,1]$:
\begin{equation}\label{eq:transfer1}
\vol(P_n) = \Big\langle T^{n-1}(\one),\one \Big\rangle, \end{equation}
where $\one$ is the constant function $1$, $T:L_2[0,1]\to L_2[0,1]$ is the self-adjoint operator given by\begin{equation}\label{eq:elkiesop} (Tf)(x) = \int_0^{1-x} f(y)dy,
\end{equation}
and $\langle\cdot,\cdot\rangle$ is the usual scalar product on
$L_2[0,1]$. The operator $T$ is called the \emph{transfer operator}
\footnote{This operator, or rather its discrete version, was applied to
  up-down permutations by V. Arnold, see \cite{Arnold91} and \cite{MSY96}.}. Transfer operators are commonly used in
combinatorics and statistical mechanics (where they are sometimes
called transfer matrices) and in the theory of dynamical systems. In
this case, having obtained the representation \eqref{eq:transfer1},
all that is left to do is to find the orthonormal basis
$(\phi_k)_{k=1}^\infty$ of eigenfunctions, with respective eigenvalues
$\lambda_k$, of $T$ 
(note that $T$ is a self-adjoint operator), since then one gets that
$$ A_n = n! \vol(P_n) = n! \sum_{k=1}^\infty \lambda_k^{n-1} \lang {\bf 1},\phi_k\rang^2. $$
The eigenfunction problem for $T$ leads easily to the solutions
$$ \phi_k(x) = \sqrt{2}\cos\left(\frac{(2k-1)\pi x}{2}\right), \quad \lambda_k = \frac{(-1)^{k-1}\cdot2}{(2k-1)\pi}, \quad k=1,2,3,\ldots $$
whereby one obtains, after some computations that we omit,
\begin{equation}\label{eq:equivalent}
 A_n = \frac{2^{n+2}n!}{\pi^{n+1}}\sum_{k=1}^\infty \frac{(-1)^{(k-1)(n-1)}}{(2k-1)^{n+1}},
\end{equation}
a formula that is equivalent to \eqref{eq:andreformula}.

\section{Markovian polytopes}

We wish to generalize the transfer operator method to the setting of $m$-strip tableaux. 
For skew Young diagrams we can define their associated order polytope and try to apply the same idea to its volume computation. However, there is a certain property of the polytopes associated with up-down permutations that makes usage of the transfer operator method possible (after all, computing volumes of polytopes exactly is in general quite difficult). Borrowing from the language of probability theory, we call this property the Markov property.

\begin{defi}
Let $P\subset \R^d$ be a polytope. If there exists a partition of the coordinate set
$$ \{1,2,\ldots,d\} = C_1 \cup C_2 \cup \ldots \cup C_k \quad \text{(disjoint union)}, $$
such that for all $j=2,\ldots,k-1$, any \emph{$C_j$-section of $P$}, namely a set of the form 
$$
P_{\tilde{x}} := P\cap \{x\in\R^d\ :\ x_{|C_j} = \tilde{x}\},
$$
where $\tilde{x}\in \R^{C_j}$ is fixed, decomposes as a cartesian product of the form
\begin{equation}\label{eq:decompose}
P_{\tilde{x}} = P^{\tilde{x}}_{j-} \times P^{\tilde{x}}_{j+},
\end{equation}
where $P^{\tilde{x}}_{j-} \subset \R^{C_1\times C_2\times \ldots \times C_{j-1}}$ and
$P^{\tilde{x}}_{j-} \subset \R^{C_{j+1}\times C_{j+2}\times \ldots \times C_{k}}$, then we say that $P$ is \emph{Markovian} with respect to the \emph{coordinate filtration} $(C_1,C_2,\ldots,C_k)$.
If $P$ has this property, 
for any $j=1,2,\ldots,k-1$ define the $j$-th \emph{transfer operator}
$ T_j : L_2(\R^{C_j}) \to L_2(\R^{C_{j+1}}) $ associated with $P$ by
$$ \big(T_j(f)\big)(u) = \int_{R^{C_j}} f(v) 1_{\{P_{u,v}\neq \emptyset\}} dv, $$
where $P_{u,v}$ is the section
$$ P_{u,v} = \Big\{ x\in \R^d : x_{|C_j} = v,\ \ x_{|C_{j+1}}=u \Big\}. $$
\end{defi}

The importance of these definitions becomes apparent in the following proposition, which shows how certain volumes can be represented as scalar products in some $L_2$ space.

\begin{prop}\label{propmarkov} In the above notation, if $u \in \R^{C_1}, v\in \R^{C_k}$, then
\begin{equation}\label{eq:markov1}
 \vol_*(P_{u,v}) = \Big\langle T_{k-1}\circ T_{k-2} \circ \ldots \circ T_2 \circ T_1
(\delta_u),\delta_v \Big\rangle_{L_2(\R^{C_k})}.
\end{equation}
Here $\delta_u, \delta_v$ are Dirac delta functions centered around (respectively) $u,v$ in the respective distribution spaces, and $\vol_*$ is the Lebesgue measure of appropriate dimensionality (in this case, $d-|C_j|-|C_{j+1}|$).
Furthermore,
\begin{equation}\label{eq:markov2}
 \vol(P) = \Big\langle T_{k-1}\circ T_{k-2} \circ \ldots \circ T_2 \circ T_1
(\one_{L_2(\R^{C_1})}),\one_{L_2(\R^{C_k})} \Big\rangle_{L_2(\R^{C_k})},
\end{equation}
where $\one$ represents the constant function $1$ in the respective $L_2$ space.
\end{prop}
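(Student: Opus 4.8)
The plan is to reduce both identities to a single structural fact — that the defining inequalities of a Markovian polytope ``localize'' along the coordinate filtration — and then to conclude by Fubini's theorem. Throughout, for $1\le j\le k-1$ let $\kappa_j(u,v)=1_{\{P_{u,v}\neq\emptyset\}}$ denote the integral kernel of the transfer operator $T_j$, where, as in the definition, $P_{u,v}$ is the section of $P$ in which $x_{|C_j}=v$ and $x_{|C_{j+1}}=u$; note that $\kappa_j$ is bounded and measurable and that (since $P$ is bounded) all sections considered below have finite Lebesgue measure. The first and central step is the factorization
$$ 1_P(x)=\prod_{j=1}^{k-1}\kappa_j\big(x_{|C_{j+1}},x_{|C_j}\big),\qquad x\in\R^d. $$
The inequality $1_P\le\prod_j\kappa_j$ is immediate, since for $x\in P$ every section through $x$ obtained by fixing a pair of consecutive blocks is nonempty. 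For the reverse inclusion, assume the product equals $1$, so that for each $j$ there is a point $y^{(j)}\in P$ agreeing with $x$ on the blocks $C_j$ and $C_{j+1}$. I splice these together one block at a time: having built $z^{(j)}\in P$ agreeing with $x$ on $C_1,\dots,C_{j+1}$ (start with $z^{(1)}=y^{(1)}$), I fix the interior block $C_{j+1}$ in $P$; the resulting section contains both $z^{(j)}$ and $y^{(j+1)}$ and, by hypothesis \eqref{eq:decompose}, is a Cartesian product of a polytope in the coordinates $C_1\cup\dots\cup C_{j}$ and one in $C_{j+2}\cup\dots\cup C_k$, so combining the first component of $z^{(j)}$ (which matches $x$ on $C_1,\dots,C_{j}$) with the second component of $y^{(j+1)}$ (which matches $x$ on $C_{j+2}$) yields $z^{(j+1)}\in P$ agreeing with $x$ on $C_1,\dots,C_{j+2}$. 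After $k-2$ steps $z^{(k-1)}=x$, so $x\in P$.

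Granted the factorization, the two formulas become bookkeeping with iterated integrals. For \eqref{eq:markov2}, write
$$ \vol(P)=\int_{\R^d}\prod_{j=1}^{k-1}\kappa_j\big(x_{|C_{j+1}},x_{|C_j}\big)\,\prod_{j=1}^{k} dx_{|C_j} $$
and integrate out the blocks in the order $C_1,C_2,\dots$; by Fubini, integrating $\kappa_\ell(\,\cdot\,,x_{|C_\ell})$ against the partial integral already accumulated (a function of $x_{|C_\ell}$) is exactly one application of $T_\ell$, so after eliminating $C_1,\dots,C_\ell$ the integrand equals $\big(T_\ell\circ\cdots\circ T_1\big)(\one)$ evaluated at $x_{|C_{\ell+1}}$; integrating the last block $C_k$ against $\one$ produces the scalar product in \eqref{eq:markov2}. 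Identity \eqref{eq:markov1} follows the same way, with $x_{|C_1}=u$ and $x_{|C_k}=v$ held fixed and only the interior blocks integrated out: the leading factor is now $\kappa_1(\,\cdot\,,u)=T_1(\delta_u)$ and the final step is evaluation at $v$, i.e.\ pairing against $\delta_v$.

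The only point that needs a word of care is the appearance of the Dirac deltas, since $\delta_u\notin L_2(\R^{C_1})$: here $T_1(\delta_u)$ is simply read as the bounded function $v\mapsto\kappa_1(v,u)$, on which the remaining $T_j$ act as genuine bounded integral operators, and $\langle\,\cdot\,,\delta_v\rangle$ as evaluation at $v$; equivalently one may replace $\delta_u,\delta_v$ by $L_2$ approximate identities and pass to the limit, which is legitimate because the kernels are uniformly bounded and the domains have finite measure. I expect essentially all of the substance to sit in the splicing argument of the first step — specifically, keeping straight that each block being fixed is interior (so that \eqref{eq:decompose} applies) and that no empty section is ever encountered (it cannot be, since each section used contains one of the points $y^{(j)}$); once the factorization is in hand, the rest is routine Fubini.
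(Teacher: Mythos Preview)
Your proof is correct and takes a genuinely different route from the paper's. The paper argues by induction on the number of blocks: at each step it integrates out the penultimate block $C_{k-1}$, invokes the Markov product decomposition to split the section volume into a ``past'' factor (to which the inductive hypothesis applies) times the indicator $1_{\{P_{x_{k-1},v}\neq\emptyset\}}$, and then recombines via the identity $\int\langle Af,\delta_x\rangle\langle Bg,\delta_x\rangle\,dx=\langle Af,Bg\rangle$; formula \eqref{eq:markov2} is then obtained from \eqref{eq:markov1} by integrating over the endpoints. You instead isolate the combinatorial content up front as a pointwise factorization $1_P=\prod_j\kappa_j$ of the indicator, proved by a splicing argument that uses the Markov decomposition only at interior blocks, and then obtain both formulas at once by a single application of Fubini. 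Your approach makes the structure more transparent (the two formulas visibly come from the same identity, with or without the endpoint integrations) and handles the delta functions more carefully than the paper does; the paper's induction, on the other hand, avoids ever stating the factorization explicitly and works directly at the level of volumes. Both arguments rest on exactly the same uses of the Markov hypothesis.
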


\begin{proof}
First, note the following trivial identity: If $d_1,d_2,d_3\in\N$ and $A:L_2(\R^{d_1})\to L_2(\R^{d_3}),
B:L_2(\R^{d_2})\to L_2(\R^{d_3})$ are linear operators, then for any functions
$f\in L_2(\R^{d_1}), g\in L_2(\R^{d_2})$ we have
\begin{equation}\label{eq:deltaiden}
\int_{\R^{d_3}} \Big\langle Af,\delta_x\Big\rangle\Big\langle Bg,\delta_x\Big\rangle dx
= \Big\langle Af,Bg\Big\rangle.
\end{equation}

Second, note that from the definition it follows that
the set $P_{j-}^{\tilde{x}}$ in \eqref{eq:decompose}
is, if it's not empty, also Markovian with respect to the filtration 
$(C_1,\ldots,C_{j-1})$, with the same transfer operators $T_1,\ldots,T_{j-2}$ as $P$.

Now, equation \eqref{eq:markov2} clearly follows from \eqref{eq:markov1} by integrating over $u\in \R^{C_1}, v\in \R^{C_k}$. To prove \eqref{eq:markov1}, we use induction together with the above observations.
For each $x_{k-1}\in \R^{C_{k-1}}$ let $P_{u,x_{k-1},v}$ be the section
$$ P_{u,x_{k-1},v} = \Big\{ x\in \R^d : 
x_{|C_j} = v,\ \ x_{|C_{k-1}}=x_{k-1},\ \ x_{|C_{j+1}}=u \Big\}. $$
Then

{\vbox{
\begin{eqnarray*} \vol(P_{u,v}) &=& \ \ \ \int_{\R^{C_{k-1}}} \vol(P_{u,x_{k-1},v})dx_{k-1} 
\qquad\qquad\qquad\qquad\qquad\qquad\end{eqnarray*}

\vspace{-15.0pt}
\begin{eqnarray*}
\ \ \ & \stackrel{\text{(Markov)}}{=} &
\int_{\R^{C_{k-1}}}\vol_*(P_{(k-1)-,u}^{x_{k-1}}) 1_{\{P_{x_{k-1},v}\neq \emptyset\}} dx_{k-1} \\
&\stackrel{\text{(induction)}}{=}&
\int_{\R^{C_{k-1}}}\Big\langle T_{k-2}\ldots T_2 T_1 \delta_u,\delta_{x_{k-1}}\Big\rangle
\Big\langle T_{k-1}\delta_{x_{k-1}},\delta_v\Big\rangle dx_{k-1}
\\ &=&\int_{\R^{C_{k-1}}}\Big\langle T_{k-2}\ldots T_2 T_1 \delta_u,\delta_{x_{k-1}}\Big\rangle
\Big\langle \delta_{x_{k-1}},T_{k-1}^*\delta_v\Big\rangle dx_{k-1} \\ &
\stackrel{\text{(by eq. }\eqref{eq:deltaiden}\text{)}}{=}&
\Big\langle T_{k-2}\ldots T_2 T_1 \delta_u,T_{k-1}^*\delta_v\Big\rangle
\\ &=&
\Big\langle T_{k-1}T_{k-2}\ldots T_2 T_1 \delta_u,\delta_v\Big\rangle,
\end{eqnarray*}
}}

\noindent as claimed.
\end{proof}

\section{The 3-strip \label{sec3strip}}

As a first novel application of the transfer operator technique, we analyze the case of 3-strip tableaux. As in the case of up-down permutations, the first step is to change the discrete problem of enumeration of tableaux to a continuous problem of the computation of a volume of a polytope. Given a skew Young diagram $D$ (considered as a subset of $\N^2$), define its associated \emph{order polytope} as
$$ P_D = \Big\{ x\in [0,1]^D : x_{(i,j)} \le x_{(i',j')}\text{ if }i\le i', j\le j' \Big\}. $$
For the same reasons as before, we have the connection between the discrete and continuous problems: 
\begin{equation} \label{eq:enumerate}
 \vol(P_D) = \frac{\dd(D)}{|D|!}.
\end{equation}

A key step in simplifying the analysis is choosing the correct coordinate filtration for the polytope. There is no unique way of doing this, but a judicious choice will result in a more easily diagonalizable transfer operator. For the case of a 3-strip, we choose the filtration described in Figure \ref{figfiltration3strip} below.

\begin{figure}[h!]
\begin{center}
\resizebox{5cm}{!}{\includegraphics{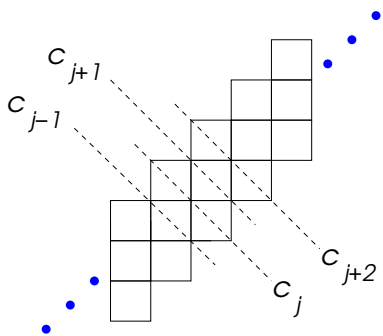}} 
\caption{The coordinate filtration for the 3-strip. \label{figfiltration3strip}}
\end{center}
\end{figure}

Note that in the figure only the part of the filtration corresponding to the body of the diagram is shown. However, for the head and the tail the principle of cutting the diagram along diagonal lines remains the same, and the filtration is constructed accordingly.

Thus, from the figure it is clear that for the body of the diagram we get two transfer operators repeating periodically in alternation. Denote $\Omega=\{(u,v)\in[0,1]^2 : u<v\}$, then the two operators are
$A:L_2[0,1]\to L_2(\Omega), B:L_2(\Omega)\to L_2[0,1]$ given by
$$
(Af)(u,v) = \int_u^v f(x)dx, \quad (Bg)(x) = \int_0^x \int_x^1 g(u,v) dv\,du.
$$
Because the composition $C:=B\circ A$ will repeat periodically, to prove Theorem \ref{thm3strip} we will need to diagonalize this operator. We compute:
\begin{eqnarray*}
(Cf)(x) &=& (B(Af))(x) = \int_0^x \int_x^1 (Af)(u,v) dv\,du
\\&=& \int_0^x \int_x^1 \int_u^v f(y)dy\,dv\,du
\\&=& \int_0^1 f(y)\left( \int_0^{x\wedge y} du \int_{x\vee y}^1 dv \right)dy
\\&=& \int_0^1 f(y)(x\wedge y)(1-x\vee y) dy
\\&=& (1-x)\int_0^x f(y)ydy+x\int_x^1 f(y)(1-y)dy.
\end{eqnarray*}
Now, to find the eigenfunctions:
\begin{eqnarray}
\lambda f(x) &=& (Cf)(x) \nonumber \\&=& (1-x)\int_0^x f(y)ydy+x\int_x^1 f(y)(1-y)dy, \label{eq:eig1}\\
\lambda f'(x) &=& -\int_0^1 yf(y)dy + \int_x^1 f(y)dy, \nonumber \\
\lambda f''(x)&=&-f(x). \nonumber
\end{eqnarray}
From \eqref{eq:eig1} we get the boundary conditions $f(0)=f(1)=0$. This gives the solutions
(scaled to have $L_2$-norm $1$):
$$ \phi_k(x)=\sqrt{2}\sin(\pi k x), \quad \lambda_k=\frac{1}{\pi^2 k^2},\quad k=1,2,3,\ldots $$

\begin{proof}[Proof of Theorem \ref{thm3strip}]
We prove \eqref{eq:form3strip3}; the proof of the other formulas is similar and is omitted. If $D_n$ is the skew Young diagram appearing in \eqref{eq:form3strip3}, then its associated order polytope is Markovian with respect to the coordinate filtration in Figure \ref{figfiltration3strip}. One has to be careful at the ends of the diagram; the first and last transfer operators are easily seen to be, respectively,
$$ (T_{\text{first}}f)(x)=(T_{\text{last}}f)(x)=\int_x^1 f(y)dy. $$
This gives, using Proposition \ref{propmarkov} and \eqref{eq:enumerate}, that the left-hand side of
\eqref{eq:form3strip3} is given by
\begin{eqnarray*} 
\dd(D_n) &=& (3n)!\vol(P_{D_n}) = (3n)!\Big\langle T_{\text{last}} (BA)^{n-1} T_{\text{first}} {\bf 1},{\bf 1} \Big\rangle
\\ &=& (3n)!\Big\langle C^{n-1} (1-x), x \Big\rangle 
\\ &=& (3n)!\sum_{k=1}^\infty \lambda_k^{n-1} \Big\langle x,\phi_k\Big\rangle
\Big\langle 1-x,\phi_k\Big\rangle
\end{eqnarray*}
A quick computation gives that
$ \Big\langle x,\phi_k\Big\rangle=\sqrt{2}(-1)^{k-1}/\pi k$,
$ \Big\langle 1-x,\phi_k\Big\rangle= \sqrt{2}/\pi k$, so
$$ \dd(D_n) = (3n)!\sum_{k=1}^\infty \frac{(-1)^{k-1}2}{\pi^2 k^2(\pi k)^{2(n-1)}}
=\frac{2(3n)!}{\pi^{2n}}\left(1-\frac{2}{2^{2n}}\right)\zeta(2n) $$
(where as usual $\zeta(x)=\sum_{n=1}^\infty n^{-x}$).
Now substituting the classical identity
\begin{equation}\label{eq:classical}
\zeta(2n) = \sum_{k=1}^\infty \frac{1}{k^{2n}} = \frac{(-1)^{n-1}\pi^{2n} 2^{2n-1} B_{2n}}{(2n)!}
\end{equation}
(together with \eqref{eq:viarelation}) gives \eqref{eq:form3strip3}.
\end{proof}

\section{The 4-strip \label{sec4strip}}

While the analysis so far has been relatively straightforward, the case of the $4$-strip is the first case where one encounters relative difficulty in diagonalizing the transfer operator, which now works on a 2-dimensional domain. As before, we choose the coordinate filtration obtained by cutting the diagram along diagonal lines, as shown in the figure below.

\begin{figure}[h!]
\begin{center}
\resizebox{5cm}{!}{\includegraphics{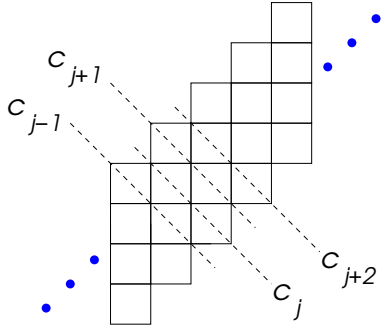}} 
\caption{The coordinate filtration for the 4-strip. \label{figfiltration4strip}}
\end{center}
\end{figure}

Again, we get the following two transfer operators repeating periodically in alternation. Denote as before
$\Omega=\{(u,v)\in[0,1]^2 : u<v\}$, then we have that $A,B:L_2(\Omega)\to L_2(\Omega)$ are given by
\begin{eqnarray*}
(Af)(x,y) &=& \int_0^x \int_x^y f(u,v) dv\,du, \\
(Bg)(u,v) &=& \int_u^v \int_v^1 g(x,y) dx\,dy.
\end{eqnarray*}
A small simplification is to note that there is a certain symmetry, in that $A$ and $B$ are conjugate to each other: $B=CAC$, where $C:L_2(\Omega)\to L_2(\Omega)$ is the reflection operator
$$ (Cf)(x,y) = f(1-y,1-x). $$
So, instead of diagonalizing the composition $BA=CACA$ we can diagonalize its simpler ``square root'' $CA$, given by
\begin{equation} (CAf)(x,y) = \int_0^{1-y} \int_{1-y}^{1-x} f(u,v) dv\,du. \label{eq:operator} \end{equation}
So we are looking for $\lambda, f$ which are solutions of
$$ \lambda f(x,y) = \int_0^{1-y} \int_{1-y}^{1-x} f(u,v) dv\,du. $$
Differentiating once with respect to each variable gives
\begin{equation} \lambda \frac{\partial^2 f}{\partial x \partial y} = f(1-y,1-x). \label{eq:diffonce}
\end{equation}
Differentiating again w.r.t. $x,y$ and substituting in \eqref{eq:diffonce} gives the PDE
\begin{equation} \frac{\partial^4 f}{\partial^2 x \partial^2 y} = \frac{1}{\lambda^2}f(x,y).
\label{eq:pde4}
\end{equation}
The boundary conditions are easily seen to be
\begin{equation}\label{eq:boundarycond4}
 f(x,1) \equiv 0, \quad f(x,x) \equiv 0, \quad f_x(0,y) \equiv 0,
\end{equation}
a mixture of Dirichlet- and Neumann-type conditions on the three boundary sides of $\Omega$.

Now, it may be verified that the $L_2$-normalized solutions to this boundary value problem are given by
\begin{eqnarray} \phi_{j,k}(x,y) &=& 2 \det\left( \begin{array}{ll} 
\cos\left(\frac{\pi(2j-1)x}{2}\right) & \cos\left(\frac{\pi(2j-1)y}{2}\right) \nonumber\\
\cos\left(\frac{\pi(2k-1)x}{2}\right) & \cos\left(\frac{\pi(2k-1)y}{2}\right) \end{array}
\right),\\ \lambda_{j,k}&=&\frac{4(-1)^{j+k-1}}{\pi^2(2j-1)(2k-1)}, \qquad k>j>0\text{ integers},
\label{eq:eigensystem}
\end{eqnarray}
(note that the eigenfunctions are parametrized by two integers).
While verification is easy, two related questions that should be addressed are: how these solutions can be derived rather than guessed; and how to prove that these solutions span all eigenspaces.
We answer the first question and then point out how this essentially contains the answer to the second. The PDE \eqref{eq:pde4} is of a very simple form that suggests trying to use Fourier series and the method of separation of variables. However, the domain $\Omega$ is not of a shape suitable for the application of this method. But looking at the boundary conditions \eqref{eq:boundarycond4} suggests looking for a function on a larger domain that satisfies the symmetry and anti-symmetry conditions
\begin{eqnarray}
f(y,x) &=& -f(x,y), \nonumber \\
f(-x,y) &=& f(x,-y) = f(x,y), \label{eq:symmetry4} \\
f(x,2-y) &=& f(2-x,y) = -f(x,y) \nonumber
\end{eqnarray}
since such a function will automatically satisfy \eqref{eq:boundarycond4}. 
Now, any function satisfying \eqref{eq:symmetry4} is periodic in both variables with period 4, i.e., satisfies $f(x+4,y)=f(x,y+4)=f(x,y)$, and its values everywhere are determined by its values on $\Omega$.
Thus one gets a problem suitable for the application of the method of separation of variables: Expand $f$ in a Fourier series
$$ f(x,y) = \sum_{a,b\in\Z} c_{a,b} \exp\left(\frac{\pi i}{2}(ax+by)\right). $$
The symmetry conditions \eqref{eq:symmetry4} translate to the following equations on the coefficients $c_{a,b}$:
$$ c_{-a,b}=c_{a,b}, \quad c_{a,-b}=c_{a,b}, \quad c_{b,a}=-c_{a,b}, $$
$$ c_{a,b} = (-1)^{b+1}c_{a,b}, \quad c_{a,b}=(-1)^{a+1}c_{a,b}. $$
In particular the last conditions imply that $a,b$ are odd if $c_{a,b}\neq 0$.
Eq. \eqref{eq:diffonce} implies the additional equation
$$ -\frac{\pi^2}{4}\lambda a b\, c_{a,b} = -i^{a+b} c_{a,b}, $$
Denoting $\lambda = 4i^{a+b}/\pi^2 a b$ for some (and therefore all) $a,b$ for which $c_{a,b}\neq 0$, it is easy to verify from these relations that $f$ is a linear combination of the functions $\phi_{j,k}$ with $k>j>0$ and  $ |a b| = (2j-1)(2k-1)$ (note that some eigenvalues do have multiplicity greater than 1, corresponding to different representations of $|ab|$ as products of distinct odd positive integers).

It remains to answer the second question posed above, of showing that the system \eqref{eq:eigensystem} is indeed a complete eigensystem for the operator $CA$ . The answer is encoded in the previous discussion, but one has to argue more formally now. Let $X$ be the subset of $L_2([-2,2]^2)$ of functions satisfying 
the symmetry conditions \eqref{eq:symmetry4}, and let $Y$ be the subset of
$L_2(\Omega)$ of functions satisfying the boundary conditions 
\eqref{eq:boundarycond4}. Let $D$ be the linear operator on $X$ 
given by
\begin{equation} (Df)(x,y) = \int_0^{1-y} \int_{0}^{1-x} f(u,v)
  dv\,du, \label{eq:op_anti}  
\end{equation}
manifestly a compact self-adjoint operator.

Note that 
$Df$ can be also defined by the right-hand side of \eqref{eq:operator}
due to the symmetry conditions imposed on elements of $X$. 
Let $P:X\to Y$ be defined by $Pf = f_{|\Omega}$. Then $P$ is clearly
an isomorphism of Hilbert spaces, and $D$ is
conjugate to $CA$ under $P$, i.e., $CA = PDP^{-1}$. Therefore $f\in Y$
is an eigenfunction of $CA$ iff $P^{-1}f$ is an eigenfunction of
$D$. As the eigenfunctions of $D$ form a complete orthonormal system in $X$,
we have proved: 

\begin{prop} \label{propstrip4diag}
The system \eqref{eq:eigensystem} is a complete normalized eigensystem
for the operator $CA$. 
\end{prop}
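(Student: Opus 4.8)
The plan is to realise the (non-self-adjoint) operator $CA$ as a conjugate of a compact self-adjoint operator living on a torus, where the existence of a complete orthonormal eigensystem is automatic, and then to identify that eigensystem explicitly by Fourier analysis. Most of the ingredients have already been assembled in the discussion preceding the statement, so the proof is mainly a matter of stitching them together rigorously and reading off the eigendata.

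First I would record the three facts supplied above: $D$, defined by \eqref{eq:op_anti}, is a compact self-adjoint operator on $X$; the restriction map $Pf=f|_\Omega$ is an isomorphism of Hilbert spaces $X\to Y$; and $CA=PDP^{-1}$ on $Y$, since on functions obeying \eqref{eq:symmetry4} the integral in \eqref{eq:op_anti} coincides with the one in \eqref{eq:operator}. To this I would add one small observation: every eigenfunction of $CA$ on all of $L_2(\Omega)$ with nonzero eigenvalue $\lambda$ already lies in $Y$, because $f=\lambda^{-1}CAf=\lambda^{-1}C(Af)$ and $Af$ visibly satisfies the boundary conditions \eqref{eq:boundarycond4} (for instance $(Af)(x,x)=\int_0^x\int_x^x=0$, and similarly on the other two sides of $\Omega$), so passing from $L_2(\Omega)$ to $Y$ discards only the kernel of $CA$ and loses no nonzero eigenvalue.

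The spectral theorem for compact self-adjoint operators now furnishes a complete orthonormal system of eigenfunctions of $D$ in $X$, with real eigenvalues; transporting it through the Hilbert-space isomorphism $P$ yields a complete orthonormal eigensystem of $CA$ in $Y$ with the same eigenvalues, and it remains only to identify this system with \eqref{eq:eigensystem}. For that I would expand a general $f\in X$, extended to a function of period $4$ in each variable (which \eqref{eq:symmetry4} forces), in the Fourier series $f(x,y)=\sum_{a,b\in\Z}c_{a,b}\exp\!\big(\tfrac{\pi i}{2}(ax+by)\big)$, rewrite \eqref{eq:symmetry4} as the coefficient relations $c_{-a,b}=c_{a,b}=c_{a,-b}$, $c_{b,a}=-c_{a,b}$, $c_{a,b}=(-1)^{a+1}c_{a,b}=(-1)^{b+1}c_{a,b}$ (which force $a,b$ odd and $|a|\ne|b|$), and note that the surviving modes fall into groups indexed by unordered pairs $\{2j-1,2k-1\}$ of distinct positive odd integers, each group consisting of the eight modes $(\pm(2j-1),\pm(2k-1))$ and $(\pm(2k-1),\pm(2j-1))$; on such a group the relations leave exactly one free scalar, and the resulting one-dimensional subspace of $X$ is spanned by the determinant $\phi_{j,k}$ of \eqref{eq:eigensystem}. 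Thus, after the evident normalisation, $\{\phi_{j,k}\}_{k>j>0}$ is precisely the orthonormal eigenbasis of $X$ produced above. Substituting the expansion into \eqref{eq:diffonce} gives $-\tfrac{\pi^2}{4}\lambda\,ab\,c_{a,b}=-i^{a+b}c_{a,b}$, hence $\lambda=4i^{a+b}/(\pi^2ab)$; for the representative mode $(2j-1,2k-1)$ one has $a+b=2(j+k-1)$, so $i^{a+b}=(-1)^{j+k-1}$ and $\lambda_{j,k}=4(-1)^{j+k-1}/(\pi^2(2j-1)(2k-1))$, the same value (as it must be) for all eight modes in the group. Applying $P$ identifies $\{\phi_{j,k}|_\Omega\}_{k>j>0}$ as a complete normalised eigensystem of $CA$, which is the assertion.

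I expect the routine parts — compactness and self-adjointness of $D$, verifying directly that the $\phi_{j,k}$ are eigenfunctions, the eigenvalue bookkeeping, and the normalisation constant — to cause no difficulty; the two places that need genuine care are the completeness statements. One is that $P$ is actually onto $Y$ and isometric up to a constant, i.e.\ that the finite group generated by the involutions in \eqref{eq:symmetry4} tiles $[-2,2]^2$ by essentially disjoint images of $\Omega$ with the correct multiplicity, so that no $L_2$-mass is lost or double-counted when passing between $X$ and $Y$. The other is that imposing the coefficient relations on a full Fourier basis of the torus leaves exactly the span of the $\phi_{j,k}$ and nothing more — in particular that the eigenvalue coincidences produced by distinct factorisations $(2j-1)(2k-1)=(2j'-1)(2k'-1)$ are accounted for simply by enumerating all such pairs $(j,k)$, rather than by any single pair contributing an eigenspace of dimension greater than one.
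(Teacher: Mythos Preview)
Your proposal is correct and follows essentially the same route as the paper: conjugate $CA$ to the compact self-adjoint operator $D$ on $X$ via the restriction isomorphism $P$, invoke the spectral theorem, and identify the resulting eigenbasis by Fourier analysis on the period-$4$ torus subject to the symmetry constraints \eqref{eq:symmetry4}. One small slip worth fixing: your aside that $Af$ ``visibly satisfies the boundary conditions \eqref{eq:boundarycond4}'' is not right (for general $f$ neither $(Af)(x,1)$ nor $(Af)_x(0,y)$ vanishes), though the conclusion that nonzero-eigenvalue eigenfunctions of $CA$ lie in $Y$ is still true if you argue directly on $CAf$ and bootstrap, and in any case the point is moot since $P$ is already surjective onto $L_2(\Omega)$.
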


\begin{proof}[Proof of Theorem \ref{thm4strip}]
Denote by $F_n$ and $G_n$ the skew Young diagrams appearing on the left-hand sides of \eqref{eq:form4strip1}, \eqref{eq:form4strip2}, respectively. Following the same reasoning as before using Proposition \ref{propmarkov} and \eqref{eq:enumerate}, we have that
\begin{eqnarray*}
\dd(F_n) &=& (4n-2)!\Big\langle (CA)^{2n-3}(y-x),y-x\Big\rangle, \\
\dd(G_n) &=& (4n)!\Big\langle (CA)^{2n-3}\left(\frac{y^2-x^2}{2}\right),\frac{y^2-x^2}{2}\Big\rangle. \\
\end{eqnarray*}
Using these formulas and Proposition \ref{propstrip4diag}, after a computation one finally arrives at \eqref{eq:form4strip1}, \eqref{eq:form4strip2}, using \eqref{eq:classical} and another classical identity due to Euler, namely
$$ \sum_{k=1}^\infty \frac{(-1)^{k-1}}{(2k-1)^{2n+1}} = \frac{(-1)^n \pi^{2n+1} E_{2n}}{2^{2n+2}(2n)!}. $$
We leave verification of these straightforward details to the reader. Also note that these computations are superseded by our more general proof of Theorem \ref{generalthm} in Section \ref{sectionproofgeneral}.
\end{proof}

\section{The $2k$-strip and the $(2k+1)$-strip}

Having analyzed the case of $4$-strip diagrams, the results are now
easily generalized to the $2k$-strip and the
$(2k+1)$. Here are the results for the $2k$-strip.
The coordinate filtration is again constructed by cutting
the diagram along diagonals. This leads to a transfer operator on 
$L_2(\Omega_k)$, where
$$ \Omega_k = \Big\{ x\in [0,1]^k : x_1\le x_2\le \ldots \le x_k \Big\}.$$
The transfer operator is given by
\begin{multline} (S_{2k}f)(x_1,\ldots,x_k) 
= \\
 \int_0^{1-x_k} \int_{1-x_k}^{1-x_{k-1}}
\int_{1-x_{k-1}}^{1-x_{k-2}} \ldots \int_{1-x_2}^{1-x_1}
f(y_1,\ldots,y_k) dy_k\,dy_{k-1}\,\ldots\,dy_1. \label{eq:transfer2k}
\end{multline}
In this representation we have already taken advantage of the symmetry 
using the ``square root'' trick as
in the case of the 4-strip to reduce two conjugate transfer operators repeating
in alternation to a single operator. Diagonalizing $S_{2k}$ leads to the following boundary value problem:
$$\frac{\partial^{k}f(x_1,x_2,\ldots,x_k)}{\partial x_1 \partial x_2 \ldots \partial x_k}
= \frac{(-1)^k}{\lambda} f(1-x_k,\ldots,1-x_2,1-x_1),$$

\vspace{-20.0pt}
\begin{eqnarray*}
f &\equiv& 0 \text{ on }x_1 \equiv x_2,
x_2\equiv x_3, \ldots, x_{k-1}\equiv x_k, x_k\equiv 1, \\
f_{x_1} &\equiv& 0\text{ on }x_1\equiv 0.
\end{eqnarray*}
The ideas of Section \ref{sec4strip} can be used to prove:

\begin{thm}[Diagonalization of the $2k$-strip transfer operator] \label{thmdiag2k}
The system of functions and associated eigenvalues
\begin{eqnarray*}
\phi_{j_1,\ldots,j_k}(x_1,\ldots,x_k) &=& 2^{k/2} \det\left(
\cos\left(\frac{\pi (2j_p-1)x_q}{2}\right) \right)_{1\le p,q\le k}, \\
\lambda_{j_1,\ldots,j_k} &=& \frac{2^k(-1)^{\binom{k}{2}+\sum_p (j_p+1)}}{\pi^k (2j_1-1)(2j_2-1)\ldots(2j_k-1)}, \\ & & \ j_1>j_2>\ldots>j_k>0
\text{ integers,}
\end{eqnarray*}
is a complete orthonormal eigensystem for the operator $S_{2k}$ in 
\eqref{eq:transfer2k}.
\end{thm}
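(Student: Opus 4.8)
The plan is to generalize the argument used for Proposition \ref{propstrip4diag} verbatim, replacing the two-dimensional domain $\Omega$ by the $k$-dimensional simplex $\Omega_k$ and the reflection group of order $8$ (generated by the symmetries in \eqref{eq:symmetry4}) by an appropriate finite reflection group acting on $[-2,2]^k$. First I would verify directly that the proposed functions $\phi_{j_1,\ldots,j_k}$ satisfy the boundary value problem: each determinant of cosines vanishes on every hyperplane $x_p \equiv x_{p+1}$ because two columns coincide there, vanishes on $x_k\equiv 1$ because $\cos(\pi(2j-1)/2)=0$, and has vanishing $x_1$-derivative on $x_1\equiv 0$ because $\sin(0)=0$; applying $\partial^k/\partial x_1\cdots\partial x_k$ multiplies the $(p,q)$ entry by $-\tfrac{\pi(2j_p-1)}{2}\sin(\tfrac{\pi(2j_p-1)x_q}{2})$, and one checks that $\det(\sin(\tfrac{\pi(2j_p-1)x_q}{2}))$ equals $(-1)^{\binom k2+\sum(j_p+1)}$ times $\det(\cos(\tfrac{\pi(2j_p-1)(1-x_{k+1-q})}{2}))$ by the reflection $x\mapsto 1-x$ and a column reversal, which is exactly what the eigenvalue formula for $\lambda_{j_1,\ldots,j_k}$ demands. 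This is a routine (if slightly lengthy) determinant manipulation.

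The substantive part is completeness, and here I would mirror the $X$--$Y$ isomorphism argument. Let $X\subset L_2([-2,2]^k)$ be the subspace of functions that are skew-symmetric under permutations of the coordinates, even in each variable separately, and anti-invariant under each reflection $x_i\mapsto 2-x_i$; let $Y\subset L_2(\Omega_k)$ be the functions satisfying the stated boundary conditions. The group generated by coordinate permutations, sign changes, and the affine reflections $x_i\mapsto 2-x_i$ acts on $[-2,2]^k$ (after tiling $\R^k$ by period-$4$ translates) with fundamental domain $\Omega_k$, so restriction $P\colon X\to Y$, $Pf=f_{|\Omega_k}$, is a Hilbert space isomorphism. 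Define on $X$ the operator $(Df)(x)=\int_0^{1-x_k}\!\int_0^{1-x_{k-1}}\!\cdots\int_0^{1-x_1} f(u)\,du$, which is compact and self-adjoint; using the symmetry relations defining $X$ one shows $Df$ agrees with the right-hand side of \eqref{eq:transfer2k} on $\Omega_k$, so $S_{2k}=PDP^{-1}$. Since $D$ is compact self-adjoint on the Hilbert space $X$, its eigenfunctions form a complete orthonormal system there; the $\phi_{j_1,\ldots,j_k}$ are (after normalization) exactly the images of the natural Fourier-type eigenbasis of $D$ under inclusion $X\hookrightarrow L_2$, hence form a complete eigensystem for $S_{2k}$ on $Y$.

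To pin down that eigenbasis of $D$ concretely, I would expand $f\in X$ in a multivariate Fourier series $f(x)=\sum_{a\in\Z^k} c_a \exp(\tfrac{\pi i}{2}\langle a,x\rangle)$; the symmetry conditions force $c_a$ to be supported on vectors with all coordinates odd, to change sign under transpositions of the $a_i$ (so only strictly-decreasing tuples of odd positive integers survive up to sign), and to be even under sign changes of each $a_i$. The relation analogous to \eqref{eq:diffonce}, namely $(Df)$ being an eigenfunction with eigenvalue $\lambda$, translates into $\bigl(\tfrac{\pi}{2}\bigr)^k \lambda\, a_1 a_2\cdots a_k\, c_a = \pm i^{\sum a_i} c_a$, which forces $\lambda = \bigl(\tfrac{2}{\pi}\bigr)^k i^{\sum a_i}/(a_1\cdots a_k)$ and identifies $\phi_{j_1,\ldots,j_k}$ (with $2j_p-1=a_p$) as the eigenvector, with $|a_1\cdots a_k|=\prod(2j_p-1)$; as noted in the $4$-strip discussion, eigenvalues coincide when the product $\prod(2j_p-1)$ can be written in more than one way as a product of distinct odd positive integers, so the relevant eigenspaces are spanned exactly by these $\phi$'s.

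I expect the main obstacle to be purely bookkeeping: correctly identifying the finite reflection group and its fundamental domain so that $P$ is genuinely an isometry (in particular getting the measure-theoretic matching of $\Omega_k$ with a fundamental domain right, and confirming the affine reflections $x_i\mapsto 2-x_i$ together with the coordinate permutations and sign changes generate a group whose orbit of $\Omega_k$ tiles the period cell), and tracking the sign $(-1)^{\binom k2+\sum(j_p+1)}$ through the column-reversal and $x\mapsto 1-x$ steps so that it matches the stated $\lambda_{j_1,\ldots,j_k}$. None of these steps is conceptually hard once the $4$-strip case is in hand; the work is in verifying the higher-dimensional analogues of identities that were transparent in two variables.
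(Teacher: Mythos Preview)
Your proposal is correct and follows exactly the approach the paper indicates: the paper itself gives no detailed proof of this theorem, saying only that ``the ideas of Section~\ref{sec4strip} can be used to prove'' it, and your plan faithfully generalizes the $X$--$Y$ conjugation argument, the symmetry conditions \eqref{eq:symmetry4}, and the Fourier-series diagonalization from the $4$-strip case to the $k$-dimensional simplex. The bookkeeping concerns you flag (the reflection group tiling and the sign $(-1)^{\binom{k}{2}+\sum(j_p+1)}$) are the only real work, and they are routine once the $k=2$ case is understood.
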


For the $(2k+1)$-strip, we have similar results generalizing the analysis 
of Section \ref{sec3strip}. In this case, we have two operators
$A:L_2(\Omega_{k+1})\to L_2(\Omega_k),
B:L_2(\Omega_k)\to L_2(\Omega_{k+1})$ given by
\begin{multline*}
(A g)(x_1,\ldots,x_k)
 = \\ \int_0^{x_1} \int_{x_1}^{x_2}
\int_{x_2}^{x_3}\ldots \int_{x_{k-1}}^{x_k} \int_{x_k}^1
g(y_1,\ldots,y_{k+1})dy_{k+1}\,\ldots\,dy_1,
\end{multline*}

\vspace{-16.0pt}
\begin{multline*}
\ \ (B h)(y_1,\ldots,y_{k+1})
 \ = \\ \int_{y_1}^{y_2} \int_{y_2}^{y_3}
\int_{y_3}^{y_4}\ldots \int_{y_k}^{y_{k+1}} 
h(x_1,\ldots,x_k)dx_k\,\ldots\,dx_1.
\end{multline*}
The relevant operator to diagonalize is the composition $A B$. This leads
to the following boundary value problem:
$$\frac{\partial^{2k}f}{\partial^2 x_1 \partial^2 x_2 \ldots \partial^2 x_k}
= \frac{1}{\lambda^2} f, \qquad f_{\big| \partial \Omega_k}\equiv 0$$
The solution is given by the following theorem.

\begin{thm}[Diagonalization of the $(2k+1)$-strip transfer operator] \label{thmdiag2kp1}
The system of functions and associated eigenvalues
\begin{eqnarray*}
\phi_{j_1,\ldots,j_k}(x_1,\ldots,x_k) &=& 2^{k/2} \det\Big(
\sin\left(\pi j_p x_q\right) \Big)_{1\le p,q\le k}, \\
\lambda_{j_1,\ldots,j_k} &=& \frac{1}{\pi^{2k} j_1^2 j_2^2\ldots j_k^2}, \\ & &  j_1>j_2>\ldots>j_k>0
\text{ integers,}
\end{eqnarray*}
is a complete orthonormal eigensystem for the operator $A B$.
\end{thm}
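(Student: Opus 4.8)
The plan is to mirror the argument used for the $4$-strip in Section \ref{sec4strip}, which already contained the essential idea of passing from a boundary value problem on the simplex $\Omega_k$ to a Fourier problem on a larger box by imposing appropriate symmetry and antisymmetry relations. Here the boundary value problem is cleaner than in the $2k$-strip case: the operator to diagonalize is $AB$, it is compact and self-adjoint on $L_2(\Omega_k)$, and an eigenfunction $f$ with eigenvalue $\lambda$ satisfies the separable PDE $\partial^{2k}f/\partial^2 x_1\cdots\partial^2 x_k = \lambda^{-2}f$ together with the pure Dirichlet condition $f\equiv 0$ on $\partial\Omega_k$. So the first step is to verify directly, by differentiating under the integral sign in the definitions of $A$ and $B$, that an eigenfunction of $AB$ does satisfy this PDE and these boundary conditions (and that conversely a solution of the BVP is an eigenfunction, after checking that the iterated antiderivative defining $(AB)^{-1}$ on the relevant subspace matches the differential operator).

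Next I would run the separation-of-variables/extension argument. Because the operator $B$ followed by $A$ amounts to integrating between consecutive coordinates the way $C = BA$ did in the $3$-strip, the natural extension of a function vanishing on $\partial\Omega_k$ is the one that is \emph{antisymmetric} under each reflection $x_q\mapsto -x_q$ and $x_q\mapsto 2-x_q$, and antisymmetric under transpositions of coordinates. Such a function is periodic with period $2$ in each variable, odd in each variable, and so expands in a sine series $f(x_1,\ldots,x_k) = \sum_{b_1,\ldots,b_k} c_{b_1,\ldots,b_k}\prod_q \sin(\pi b_q x_q)$ over positive integers $b_q$. The antisymmetry under coordinate transpositions forces $c$ to be an alternating function of the multi-index, so $f$ is a linear combination of the determinants $\det(\sin(\pi j_p x_q))_{p,q}$ with $j_1 > j_2 > \cdots > j_k > 0$; plugging such a determinant into the PDE reads off the eigenvalue $\lambda = 1/(\pi^{2k}j_1^2\cdots j_k^2)$. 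The normalization constant $2^{k/2}$ comes from the standard computation $\int_{\Omega_k}\det(\sin(\pi j_p x_q))^2 = \frac{1}{k!}\int_{[0,1]^k}\det(\cdots)^2 = \frac{1}{k!}\cdot\frac{k!}{2^k} = 2^{-k}$, using orthogonality of the $\sin(\pi j x)$ on $[0,1]$.

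For completeness — showing these functions span \emph{all} of $L_2(\Omega_k)$ — I would argue exactly as in Proposition \ref{propstrip4diag}: let $X\subset L_2([-1,1]^k)$ (or an appropriate box) be the closed subspace of functions obeying the extension symmetries, let $Y\subset L_2(\Omega_k)$ be the functions vanishing on $\partial\Omega_k$, and let $P\colon X\to Y$ be restriction, an isomorphism of Hilbert spaces. Define on $X$ the compact self-adjoint operator $D$ given by the iterated integral that reduces to $AB$ on $\Omega_k$ via the symmetries; then $AB = PDP^{-1}$, so the eigenfunctions of $AB$ are the $P$-images of those of $D$, and $D$'s eigenfunctions form a complete orthonormal system in $X$ because $D$ is self-adjoint and compact with trivial kernel on $X$ (the sine monomials are a basis of $X$ and each is an eigenvector). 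Composing with $P$ and sorting the sine monomials into alternating combinations produces precisely the claimed system.

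The main obstacle I anticipate is not any single step but the bookkeeping in the first step: carefully justifying that the differential operator $\partial^{2k}/\partial^2 x_1\cdots\partial^2 x_k$ is a genuine inverse of $\lambda^2 AB$ on the right function space, i.e., that the boundary terms produced when one repeatedly integrates by parts (or differentiates the nested integrals defining $A$ and $B$) all vanish by virtue of the Dirichlet conditions, and that one has exactly the right number of boundary conditions to make the BVP well-posed and equivalent to the eigenvalue equation. The symmetry-extension argument itself is essentially forced once the correct reflection group is identified, and identifying it is straightforward by analogy with the $3$- and $4$-strip cases — there the alternation of Dirichlet conditions across the diagonal walls of $\Omega_k$ dictates odd extensions, just as the Neumann condition $f_{x_1}\equiv 0$ dictated the even (cosine) extension in the $2k$-strip.
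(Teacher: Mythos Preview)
Your proposal is correct and follows exactly the approach the paper intends: the paper does not give a separate proof of Theorem \ref{thmdiag2kp1} but simply states (just before Theorem \ref{thmdiag2k}) that ``the ideas of Section \ref{sec4strip} can be used to prove'' both diagonalization theorems, and your sketch is precisely the natural adaptation of that argument --- derive the Dirichlet boundary value problem on $\Omega_k$, extend by odd reflections and coordinate antisymmetry to a periodic problem, read off the sine-determinant eigenfunctions, and conclude completeness via the conjugation argument of Proposition \ref{propstrip4diag}. Your normalization and completeness computations are correct; the only caveat is that the PDE constant (stated as $1/\lambda^2$ both in the paper and in your write-up) should really be $(-1)^k/\lambda$, as one sees already in the $k=1$ case of Section \ref{sec3strip}, but this does not affect the argument since the eigenfunctions and eigenvalues are verified directly.
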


\section{A Schur function identity \label{sectionschur}}

In this section, we prove an identity that will be used in the next section in the proof of Theorem \ref{generalthm}.

Let $\lambda=\left(\lambda_1\geq\lambda_2\geq\ldots\geq\lambda_k\ge 0\right)$ be a
Young diagram, possibly with some parts being 0.
Consider the head (or tail) polytope $P_\lambda$,
defined by the system of linear inequalities:
\begin{eqnarray*}
0\leq y_{i,j}\leq 1&& 1\leq i\leq k,\ \  -(k-i)\leq j\leq \lambda_i,\\
y_{i,j}\geq y_{i+1,j}&&\text{ where defined,}\\
y_{i,j}\geq y_{i,j+1}&&\text{ where defined,}
\end{eqnarray*}
and for ${\bf x}=(x_1,\ldots,x_k)$ in the simplex $\Omega_k$, denote by $P_\lambda({\bf x})$ the section
$$ P_\lambda({\bf x}) = \Big\{ y\in P_\lambda : y_{k+1-i,-(i-1)} = x_i,\ \  i=1,2,\ldots,k \Big\} $$
(see Figure \ref{head}).

\begin{figure}[h!]
\begin{center}
{\includegraphics[width=.5\textwidth]{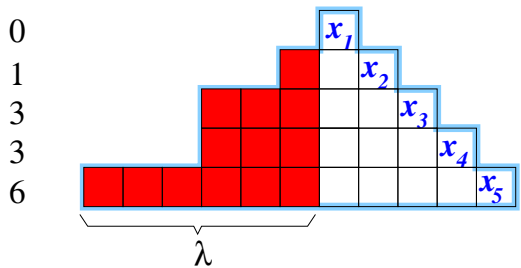}}
\caption{The polytope $P_\lambda$ is the order polytope of the diagram (which is \emph{not} a skew Young diagram) formed by leaning the Young diagram $\lambda$ against a triangular (``staircase'') diagram, and $P_\lambda({\bf x})$ is its section where the values along the main diagonal are $x_1,\ldots,x_k$. In this example $\lambda=(6,3,3,1,0)$. \label{head}}
\end{center}
\end{figure}

Obviously, the volume of $P_\lambda({\bf x})$ is a polynomial function on
the simplex $\Omega_k$. More precisely:
\begin{prop} \label{propschur}
Let $L_i= \lambda_i+k-i,\ \  1\leq i\leq k$. 
The volume of the polytope $P_\lambda({\bf x})$ (of the appropriate dimensionality) is 
\begin{equation}\label{eq:monomial}
\vol_*(P_\lambda({\bf x}))=\frac{1}{\prod_{i=1}^k L_i!} \det\left(
  x_i^{L_j}\right)_{1\leq i,j\leq k} 
\end{equation}
\end{prop}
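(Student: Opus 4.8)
The plan is to compute $\vol_*(P_\lambda(\mathbf{x}))$ by induction on $k$, peeling off one ``row'' of the leaning diagram at a time, using the fact that the $P_\lambda$ construction has exactly the Markovian structure already exploited in Sections 3--5. First I would set up coordinates carefully: the section $P_\lambda(\mathbf{x})$ fixes the values along the main diagonal to be $x_1 \le x_2 \le \ldots \le x_k$ (recall these lie in $\Omega_k$), and the remaining free coordinates split, by the order relations, into two independent blocks — the cells strictly above the diagonal and the cells strictly below. Above the diagonal sits (a copy of) the order polytope of the Young diagram $\lambda$ leaning against a smaller staircase, with the diagonal values acting as boundary data; below the diagonal sits the order polytope of a pure staircase of height $k-1$, again with the same diagonal values as boundary data. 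So $\vol_*(P_\lambda(\mathbf{x}))$ factors as a product of two volumes, each of which is itself a section-volume of the same type with $k$ replaced by $k-1$.

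The cleanest way to organize this is to prove the stronger statement that captures both blocks simultaneously: for the pure staircase of height $k-1$ with prescribed diagonal values $x_1,\ldots,x_k$, the section volume is $\frac{1}{\prod_{i=0}^{k-1} i!}\det(x_i^{j-1})_{1\le i,j\le k}$, i.e.\ a Vandermonde-type determinant; this is the $\lambda = (0,\ldots,0)$ instance of \eqref{eq:monomial}, and it has a direct inductive proof because integrating a monomial determinant over a nested chain of intervals reproduces a monomial determinant with shifted exponents (the ``iterated integral = shifted Vandermonde'' computation). Feeding this back, the above-diagonal block gets handled by the same induction with the genuine partition $\lambda$: removing the top row $\lambda_1$ contributes an integration $\int$ over the appropriate nested region, raising exponents and dividing by the factorials $L_i!$ exactly as the right-hand side of \eqref{eq:monomial} predicts. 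Collecting the two determinants and using multilinearity of the determinant (and the combinatorial identity $\{L_1,\ldots,L_k\} \cup \text{(staircase exponents)}$ reassembling correctly) yields the single determinant $\det(x_i^{L_j})$ with the claimed normalization $1/\prod L_i!$.

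Concretely, the induction step runs as follows: write the innermost integral of the iterated-integral expression for $\vol_*(P_\lambda(\mathbf{x}))$, observe the integrand is (by the inductive hypothesis) a determinant in monomials of the interior diagonal variables, integrate it against the remaining nested constraints coming from the cells adjacent to the diagonal, and check that each such integration sends $\det(\text{monomials of degrees } d_j)$ to $\det(\text{monomials of degrees } d_j+1)$ divided by the product of the new degrees — this is precisely the computation already seen in miniature in the $3$-strip and $4$-strip sections when eigenfunctions were verified. The base case $k=1$ is immediate: $P_\lambda(\mathbf{x})$ is then a product of $\lambda_1$ nested intervals capped by $x_1$, of volume $x_1^{\lambda_1}/\lambda_1! = x_1^{L_1}/L_1!$.

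The main obstacle I anticipate is bookkeeping rather than conceptual: making the ``above the diagonal splits off as $\lambda$-on-a-staircase, below splits off as a pure staircase'' decomposition precise requires being scrupulous about which coordinate plays the role of which $x_i$ after the split (indices get reversed, as the $y_{k+1-i,-(i-1)}=x_i$ convention already signals), and about the fact that when $\lambda_k=0$ the two blocks are not symmetric in size. A secondary subtlety is ensuring the final multilinear-determinant assembly uses the exponents $\{L_1 > L_2 > \ldots > L_k\}$ in the right rows; since the $L_i$ are strictly decreasing while the pure-staircase step naturally produces increasing exponents, one should fix an orientation convention at the outset (e.g.\ always list diagonal values increasingly and exponents decreasingly) and stick with it throughout to avoid stray sign errors. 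Once the orientation is pinned down, the determinant identity is forced by multilinearity and the rest is the routine integral calculus flagged above.
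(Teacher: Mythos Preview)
Your decomposition rests on a misreading of the geometry. The diagonal cells $y_{k+1-i,-(i-1)}$ are the \emph{leftmost} cells of their rows: row $r$ has column range $-(k-r)\le j\le\lambda_r$, and the diagonal cell in row $r$ sits precisely at $j=-(k-r)$. Thus the diagonal is the outer boundary of the combined staircase-plus-$\lambda$ shape, not an interior separating set; fixing its values does not split the remaining free coordinates into two independent blocks, because there is nothing ``below'' the diagonal at all. The interior staircase cells (those with $-(k-i)<j\le 0$) stay linked to the $\lambda$-cells through the order relations along and across the column $j=0$ and cannot be integrated independently. The claimed product factorization therefore fails, and with it the plan to reassemble two determinants into $\det(x_i^{L_j})$ by ``multilinearity'' (which in any case does not turn a product of two $k\times k$ determinants in the same variables into a single one).

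The paper's argument instead inducts on $|\lambda|+k$ by peeling off one diagonal \emph{slice}: if $\lambda_k>0$ pass to $\lambda'=(\lambda_1-1,\ldots,\lambda_k-1)$ with the same $k$; if $\lambda_k=0$ pass to $(\lambda_1,\ldots,\lambda_{k-1})$ with $k$ replaced by $k-1$. In either case the inductive volume is integrated over a simplex-shaped region in the new diagonal variables, and the key device --- absent from your sketch --- is to use the \emph{antisymmetry} of the determinantal integrand to replace that simplex by the box $\prod_i[0,x_i]$. Once the domain is a box, each variable integrates separately and your heuristic is exactly right: $\det\big(y_i^{L_j-1}/(L_j-1)!\big)$ becomes $\det\big(x_i^{L_j}/L_j!\big)$. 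So the integral calculus you describe is correct; what needs repair is the geometric set-up that produces the integral in the first place.
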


Proposition \ref{propschur} generalizes a well-known fact corresponding to the case $\lambda=(0,0,\ldots,0)$, see for example \cite[Lemma 1.12]{baryshnikov} and \cite[Section 4]{aztecdiamonds}.
It is known to experts, though perhaps in a slightly different form, and can be proved using the Karlin-McGregor theory of non-intersecting paths. We will give a direct proof by induction.
Note that the expression on the right-hand side of \eqref{eq:monomial} is equal to $\left(\prod_{i=1}^k L_i!\right)^{-1}s_\lambda({\bf x})V({\bf x})$, 
where $s_\lambda$ is the Schur symmetric function associated to $\lambda$ and $V({\bf x})=\prod_{i<j}
(x_i-x_j)$ is the Vandermonde determinant.

\begin{proof}
We will proceed by induction on $|\lambda|+k$ (where $|\lambda|=\lambda_1+\ldots+\lambda_k$). The induction base is trivial. For the inductive step, we divide into two cases according to whether $\lambda_k>0$ or $\lambda_k=0$. In the former case,  the inductive hypothesis holds for the smaller Young diagram
$$
\lambda'=\left(\lambda_1-1,\lambda_2-1,\ldots,\lambda_k-1\right),
$$
and the volume of the polytope $P_{\lambda}({\bf x})$ can be represented as
\begin{equation} \label{eq:schur_induc}
\int_0^{x_1}\int_{x_1}^{x_2}\ldots\int_{x_{k-1}}^{x_k}
\vol_*[P_{\lambda'}(y_1,\ldots,y_k)]dy_k\, dy_{k-1}\ldots dy_1
\end{equation}
(this is essentiallly a transfer operator computation, since the
diagram formed by $\lambda$ with its adjoining triangular part can be
obtained from the corresponding diagram for $\lambda'$ by adding one
diagonal). Using the inductive hypothesis, 
the integrand is antisymmetric, and therefore the expression in \eqref{eq:schur_induc} can be rewritten as
\begin{equation} \label{eq:schur_induc2}
\int_0^{x_1}\int_{0}^{x_2}\ldots\int_{0}^{x_k}
\vol_*[P_{\lambda'}(y_1,\ldots,y_k)]dy_k\, dy_{k-1}\ldots dy_1
\end{equation}
(similar reasoning was used in the proof of Proposition
\ref{propstrip4diag}), which in turn, again by the inductive hypothesis, is equal to
\begin{eqnarray} \nonumber
\int_0^{x_1}\ldots\int_{0}^{x_k} \det\left(\frac{1}{(L_j-1)!} y_i^{L_j-1}\right)_{i,j} dy_k\ldots dy_1
\qquad\qquad
\\ = \ \ \det\left(\frac{1}{L_j!} x_i^{L_j}\right)_{i,j}, 
\qquad\qquad\qquad\qquad\qquad\qquad\qquad  \label{eq:schur_induc3}
\end{eqnarray}
proving the inductive step in this case.

The other induction step deals with the second case where
$\lambda_k=0$. Here, we use the inductive hypothesis for the Young
diagram 
$$\lambda' = (\lambda_1,\ldots,\lambda_{k-1})$$
(the same diagram, but we attach to it a smaller triangle of order
$k-1$ only). The relation between the two polytopes $P_{\lambda}({\bf
  x}), P_{\lambda'}({\bf y})$ can be seen (again using an iterated
integral which is really a transfer operator computation) to be 
\begin{equation*}
\vol_*(P_\lambda({\bf x})) = \!
\int_{x_1}^{x_2}\!\int_{x_1}^{x_3}\!\ldots\!\int_{x_{1}}^{x_k}
\!\!\vol_*(P_{\lambda'}(y_2,\ldots,y_k))dy_{k}\ldots dy_3\,dy_2.
\end{equation*}
This can be dealt with
similarly to (\ref{eq:schur_induc3}), simplifying eventually to the expression 
$$ \frac{1}{\prod_{i=1}^{k-1} L_i!} \det\left( \begin{array}{llll}
x_1^{L_1} & x_2^{L_1} & \ldots & x_k^{L_1} \\
\vdots & \vdots &  & \vdots \\
x_1^{L_{k-1}} & x_2^{L_{k-1}} & \ldots & x_k^{L_{k-1}} \\
1 & 1 & \ldots & 1 
\end{array}\right)
$$
which is exactly the right-hand side of \eqref{eq:monomial} (since
$L_k=0$ in this induction step). 
\end{proof}

It is worth noting that this proof, similarly to the computations in the previous sections, is based on cutting the diagram along diagonal lines and computing recursively using transfer operators.

\section{Proof of Theorems \ref{generalthm} and \ref{combmeaning} \label{sectionproofgeneral}}

\begin{lem}[Andreief's Formula \cite{andreief}] \label{andreieff}
If $(\Omega,\mu)$ is a measure space, and $f_1,f_2,\ldots,f_k, g_1,g_2,\ldots,g_k$ are real-valued functions on $\Omega$, then
\begin{multline*} \qquad
\int_{\Omega^m} \det\bigg( f_i(x_l) \bigg)_{1\le i,l\le k}  
\det\bigg(g_j(x_l) \bigg)_{1\le l,j\le k} 
d\mu(x_1) \ldots
d\mu(x_k) \\ = k!\ \det\bigg( \int_\Omega f_i(x) g_j(x) d\mu(x) \bigg)_{i,j}. \qquad
\end{multline*}
\end{lem}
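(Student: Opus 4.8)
The plan is to prove Andreief's formula by expanding both determinants via the Leibniz permutation formula and exploiting the product structure of the measure on $\Omega^{k}$ (the exponent written $m$ in the statement should be $k$) to factorize the resulting integral.

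First I would write
\[
\det\big(f_i(x_l)\big)_{1\le i,l\le k} = \sum_{\sigma\in S_k}\mathrm{sgn}(\sigma)\prod_{l=1}^k f_{\sigma(l)}(x_l),\qquad
\det\big(g_j(x_l)\big)_{1\le l,j\le k} = \sum_{\tau\in S_k}\mathrm{sgn}(\tau)\prod_{l=1}^k g_{\tau(l)}(x_l),
\]
multiply the two expansions, and integrate term by term over $\Omega^{k}$ against $d\mu(x_1)\cdots d\mu(x_k)$. In the generic term the integrand is $\prod_{l=1}^{k} f_{\sigma(l)}(x_l)\,g_{\tau(l)}(x_l)$, whose $l$-th factor depends on $x_l$ alone; so Fubini's theorem collapses its integral to $\prod_{l=1}^{k} m_{\sigma(l),\tau(l)}$, where $m_{i,j}:=\int_\Omega f_i(x)g_j(x)\,d\mu(x)$ are the entries of the Gram matrix $M$ appearing on the right-hand side.

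It then remains to resum. The left-hand side has become $\sum_{\sigma,\tau\in S_k}\mathrm{sgn}(\sigma)\,\mathrm{sgn}(\tau)\prod_{l=1}^{k} m_{\sigma(l),\tau(l)}$. Substituting $\tau=\pi\circ\sigma$ with $\pi$ ranging over $S_k$, one has $\mathrm{sgn}(\sigma)\,\mathrm{sgn}(\tau)=\mathrm{sgn}(\pi)$, and since $l\mapsto\sigma(l)$ is a bijection of $\{1,\dots,k\}$, $\prod_{l}m_{\sigma(l),\pi(\sigma(l))}=\prod_{r=1}^{k}m_{r,\pi(r)}$. Hence for every fixed $\sigma$ the inner sum equals $\sum_{\pi\in S_k}\mathrm{sgn}(\pi)\prod_{r}m_{r,\pi(r)}=\det M$, independent of $\sigma$; summing over the $k!$ choices of $\sigma$ yields $k!\det M$, which is the claim. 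This reindexing $\tau\mapsto\pi\sigma$ is the conceptual heart of the proof: it is exactly what produces the factor $k!$ and collapses the double sum to a single determinant.

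As for difficulty: there is no real obstacle. The only point meriting a word of care is the interchange of the (finite) double sum over $S_k\times S_k$ with the integral together with the application of Fubini; this is valid as soon as all products $f_ig_j$ are $\mu$-integrable, which holds trivially in every application in this paper, where the functions involved are bounded and $\mu$ is a finite measure (Lebesgue measure on a bounded simplex, or a pushforward of it).
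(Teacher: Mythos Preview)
Your proof is correct and follows essentially the same approach as the paper's: expand both determinants via the Leibniz formula, factor the integral over $\Omega^k$ by Fubini, and reindex the double sum over $S_k\times S_k$ by the substitution $\tau=\pi\sigma$ (the paper writes it as $\pi\mapsto\pi\sigma^{-1}$, which is the same change of variable). You also correctly note the typo $\Omega^m\to\Omega^k$.
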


\begin{proof}
$$\int_{\Omega^m} \det\bigg( f_i(x_j) \bigg)_{i,j} 
\det\bigg(g_j(x_l) \bigg)_{1\le l,j\le k} 
d\mu(x_1) \ldots
d\mu(x_k) \qquad\qquad\qquad\qquad\qquad\qquad$$

\vspace{-20.0pt}
\begin{eqnarray*}
&=&
\int_{\Omega^m} \sum_{\sigma\in S_k} \sum_{\pi\in S_k} \varepsilon(\sigma)\varepsilon(\pi) \prod_{l=1}^k f_{\sigma(l)}(x_l) g_{\pi(l)}(x_l) d\mu^{\otimes k}(x_1,\ldots,x_k)
\\ &=&
\sum_{\sigma\in S_k} \sum_{\pi\in S_k} \varepsilon(\sigma)\varepsilon(\pi) \prod_{i=1}^k
\left( \int_\Omega f_{\sigma(l)}(x)g_{\pi(l)}(x) d\mu(x)\right) \\&=&
\sum_{\sigma\in S_k} \sum_{\pi\in S_k} \varepsilon(\pi \sigma^{-1}) \prod_{j=1}^k
\left( \int_\Omega f_j(x)g_{\pi \sigma^{-1}(j)}(x) d\mu(x) \right)\\&=&
k!\ \det\bigg( \int_\Omega f_i(x) g_j(x) d\mu(x) \bigg)_{i,j}.
\end{eqnarray*}
\end{proof}

\begin{figure}[h!]
\begin{center}
{\includegraphics[width=.5\textwidth]{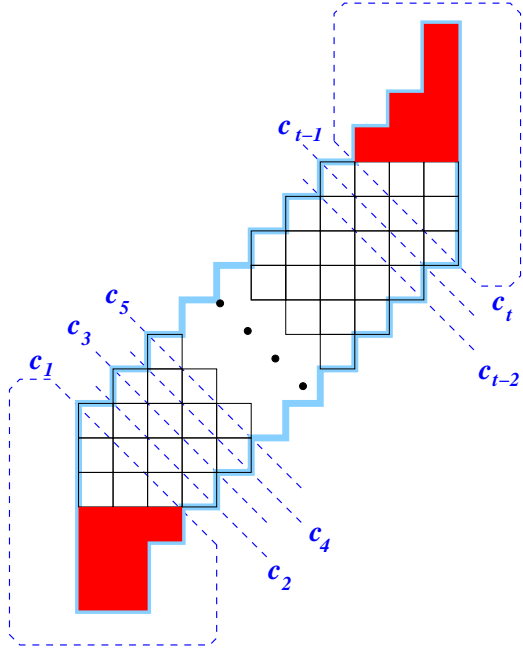}}
\caption{The coordinate filtration for the diagram $D$. \label{filtrationfig}}
\end{center}
\end{figure}

\begin{proof}[Proof of Theorem \ref{generalthm}]

We now compute $\dd(D)$, where $D$ is described in Theorem
\ref{generalthm}, for the case when $m=2k$ is even. The case of odd
$m$ is done similarly and is left to the reader. We choose the usual
filtration $(C_1,C_2,\ldots,C_t)$, except that the first coordinate
set $C_1$ represents the tail and the triangular part of the body to
which it is attached, and similarly the last coordinate set $C_t$
represents the head and the triangular part of the body to which it is
attached, as shown in Figure \ref{filtrationfig}. Proposition \ref{propschur} gives us
a good starting position for this computation, since it implies that
the first and last transfer operators, $T_1$ and $T_t$, satisfy
\begin{eqnarray*} 
(T_1\one)(x_1,\ldots,x_k) &=& \frac{1}{\prod_{i=1}^k L_i!}
\det\bigg( x_i^{L_j} \bigg)_{1\le i,j\le k} =: \psi_\lambda(x_1,\ldots,x_k), \\
(T_t^*\one)(x_1,\ldots,x_k) &=& \frac{1}{\prod_{i=1}^k M_i!}
\det\bigg( x_i^{M_j} \bigg)_{1\le i,j\le k}
=: \psi_\mu(x_1,\ldots,x_k).
\end{eqnarray*}
The transfer operator for the body part is $S_{2k}$ defined in \eqref{eq:transfer2k}.
Therefore we get using Proposition \ref{propmarkov} that
$$
\frac{\dd(D)}{|D|!}= \lang T_t \circ S_{2k}^{2n-m+1} \circ T_1 \one, \one \rang
= \lang S_{2k}^{2n-2m+1} \psi_\lambda, \psi_\mu \rang $$
Expanding this using Theorem \ref{thmdiag2k}, we get, in the notation of that theorem,
\begin{equation} \label{eq:continuing}
\frac{\dd(D)}{|D|!} =
\sum_{j_1>j_2>\ldots>j_k>0}\lambda_{j_1,\ldots,j_k}^{2n-m+1}
\lang \psi_\lambda,\phi_{j_1,\ldots,j_k} \rang\lang \psi_\mu,\phi_{j_1,\ldots,j_k} \rang.
\end{equation}
To compute $\lang \psi_\lambda,\phi_{j_1,\ldots,j_k} \rang, \lang \psi_\mu,\phi_{j_1,\ldots,j_k} \rang$,
we use Lemma \ref{andreieff}: 

{\vbox{
$$
\lang \psi_\lambda,\phi_{j_1,\ldots,j_k} \rang \qquad\qquad\qquad\qquad\qquad\qquad\qquad\qquad
\qquad\qquad\qquad\qquad
$$

\vspace{-20.0pt}
\begin{eqnarray*}
& =& \int\ldots\int_{\Omega_k} 
\det\bigg(\frac{x_r^{L_s}}{L_s!}
\bigg)_{r,s}\!\!\! \det\bigg(\!\! \sqrt{2}\cos\left(\frac{\pi}{2} (2j_r-1)x_s\right)\!\!\bigg)_{r,s}\! dx_1\ldots\! dx_k
\\ &=& \frac{1}{k!}\int\ldots\int_{[0,1]^k}
\det\bigg(\frac{x_r^{L_s}}{L_s!}
\bigg)_{r,s}\!\!\! \det\bigg(\!\! \sqrt{2}\cos\left(\frac{\pi}{2} (2j_r-1)x_s\right)\!\!\bigg)_{r,s}\! d{\bf x}
\\&=& 2^{k/2}\det\left(\frac{1}{L_s!} \int_0^1 x^{L_s} \cos\left(\frac{\pi}{2}(2j_r-1)x\right)dx \right)_{r,s}
\\ &=& 2^{k/2}
\det\bigg( I(L_s,j_r)\bigg)_{r,s},
\end{eqnarray*}
}}
where we have denoted
$$ I(a,j) = \frac{1}{a!}\int_0^1 x^a \cos\left(\frac{\pi}{2}(2j-1)x\right)dx. $$
Now, continuing \eqref{eq:continuing}, we get, \emph{again} using Lemma \ref{andreieff} (this time used with a discrete measure), that
$$ (-1)^{\binom{k}{2}}\cdot 
\frac{\dd(D)}{|D|!} \qquad\qquad\qquad\qquad\qquad\qquad\qquad\qquad\qquad\qquad\qquad\qquad $$

\vspace{-18.0pt}
\begin{eqnarray}
&=& \nonumber
\!2^k \!\!\!\!\!\!
\sum_{j_1>\ldots>j_k>0} \prod_{r=1}^k \left(\frac{2(-1)^{j_r+1}}{\pi(2j_r-1)}\right)^{2n-m+1}
\!\!\!\!\!\!\!\!\!
\det\bigg( I(L_s,j_r)\bigg)_{r,s} \!\!\!\!\!\!\det\bigg( I(M_s,j_r)\bigg)_{r,s}
\\ \nonumber &=& 
\frac{2^k}{k!}
\sum_{{\bf j}\in\mathbb{N}^k} \prod_{r=1}^k \left(\frac{2(-1)^{j_r+1}}{\pi(2j_r-1)}\right)^{2n-m+1}
\!\!\!\!\!\!\!\!\!
\det\bigg( I(L_s,j_r)\bigg)_{r,s} \!\!\!\!\!\!\det\bigg( I(M_s,j_r)\bigg)_{r,s}
\\ &=& 
\det\Bigg( 2\sum_{j=1}^\infty \left(\frac{2(-1)^{j+1}}{\pi(2j-1)}\right)^{2n-m+1} I(L_s,j)
I(M_r,j) \Bigg)_{r,s}. \label{eq:combined}
\end{eqnarray}
A simple computation, which we include in Appendix A, shows that
\begin{eqnarray} \nonumber
 I(a,j) &=& (-1)^{j+1} \sum_{p=0}^{\lfloor a/2\rfloor} \frac{(-1)^p}{(a-2p)!} \left(\frac{2}{\pi(2j-1)}\right)^{2p+1}
\\ & & + (-1)^{\frac{a+1}{2}} \left(\frac{2}{\pi(2j-1)}\right)^{a+1} 1_{[a\text{ odd}]}. \label{eq:tedious}
\end{eqnarray}
We also verify in Appendix A that
\begin{equation}\label{eq:verify}
 X_N(p,q) = 2\sum_{j=1}^\infty \left(\frac{2 (-1)^{j+1}}{\pi(2j-1)}\right)^N I(p,j) I(q,j).
\end{equation}
Therefore the expression written inside the determinant in \eqref{eq:combined} is exactly
$X_{2n-m+1}(L_s,M_r)$. This proves \eqref{eq:ifeven}.
\end{proof}

\begin{proof}[Sketch of proof of Theorem \ref{combmeaning}]
Eq. \eqref{eq:alphan} follows immediately from the case $m=2$ of Theorem \ref{generalthm}. Eq. \eqref{eq:betan} does not follow from Theorem \ref{generalthm}, but can be proved using the same techniques. In fact, a both formulas can be treated simultaneously by showing that
$$ X_N(p,q) = \lang T^N(x^p/p!),x^q/q! \rang, $$
where $T$ is Elkies's transfer operator defined in \eqref{eq:elkiesop}, and relating this quantity to $\alpha_n$ and $\beta_n$ using Proposition \ref{propmarkov} and the trivial ($k=1$) case of Proposition \ref{propschur}.
\end{proof}

\section{Additional comments and questions \label{sectioncomments}}

We conclude with some final comments and open questions.

\paragraph{1. More enumeration formulas.}
Our treatment of enumeration formulas that can be derived by using the explicit diagonalization of the transfer operators in Theorems \ref{thmdiag2k}, \ref{thmdiag2kp1} is by no means complete and is meant more as an illustration of the power and generality of the technique. One may consider several variant formulas. For example, one may lean the head Young diagram against the vertical side of the triangle (the shape on the right-hand side of Figure \ref{figbelow} is a simple example of this). This will result in enumeration formulas which are minor variations on the formulas of Theorem \ref{generalthm}. More generally, one may consider $m$-strip diagrams with head and tail shapes which are not Young diagrams, so that the overall shape is not necessarily even a skew Young diagram. Formula \eqref{eq:aitkeneq} will no longer apply, but the transfer operator technique will still enable deriving enumeration formulas for such ``generalized tableaux'', although the use of Proposition \ref{propschur} would have to be replaced with more tedious manual computation.

\paragraph{2. The principal eigenfunction.} It is interesting to look at the principal eigenfunction of the transfer operators, since, for example, it controls the limiting behavior of uniform random points in the order polytope of a very long $m$-strip diagram (when $m$ is fixed and $n$ goes to infinity). One gets a simple product representation. For example, for the even case where $m=2k$, the principal eigenfunction is
$$ \psi(x_1,\ldots,x_k) =2^{k/2} \det\Bigg( \cos\left(\frac{\pi (2i-1)x_j}{2}\right)\Bigg)_{1\le i,j\le k}. $$
By representing each of the cosines as a Chebyshev polynomial in $\cos(\pi x_j/2)$, one can transform this determinant into
$$ \psi({\bf x}) =2^{k/2} \prod_{i=1}^k \cos\left(\frac{\pi x_i}{2}\right) 
\prod_{1\le i<j\le k} \Bigg(\cos^2\left(\frac{\pi x_j}{2}\right)-\cos^2\left(\frac{\pi x_i}{2}\right)\Bigg). $$
The probability density $\psi({\bf x})^2$, which will arise as the stationary distribution of the coordinates of a uniform point in the order polytope when cutting along successive diagonals, is similar to eigenvalue densities arising in random matrix theory. One may hope to exploit this to derive interesting results about random $m$-strip tableaux.

\paragraph{3. Open problem: Generating functions.}
It would be interesting to find formulas for the generating functions of some of the families of diagrams treated above. In some cases it's easy. For example, for the diagram $D_n$ in eq. \eqref{eq:form3strip3} the generating function can be computed to be
$$ \sum_{n=0}^\infty \frac{\dd(D_n) x^{2n}}{(3n)!} = x\left(\cot \frac{x}{2} - \cot x\right). $$
In more complicated cases we have not found formulas for the generating functions.

\paragraph{4. Open problem: Bijective and combinatorial proofs.}
Another direction which might be interesting to pursue is to look for combinatorial or bijective proofs of some of our formulas, for example to prove Theorem \ref{thm3strip} by directly relating $3$-strip tableaux to up-down permutations in some combinatorial way.

\paragraph{5. Extension to periodic ribbon tableaux.} 
We restricted our computations to the simplest case of ``periodic''
Young diagrams, those having the slope $(1,1)$. Most of the results
above can be carried over (at the cost of a sizable increase of
complexity) to the case of Young diagrams shaped as stacks of $m$
periodic ribbon Yound diagrams (if one considers our $m$-strip
diagonal YD's as stacks of $m$ {\em up-down} YD's). 
We plan to present these results in a subsequent publication.

\section*{Appendix A. Some computations}

\subsection*{A.1. Proof of \eqref{eq:tedious}}
Let $\theta = \frac{\pi}{2}(2j-1)$. By repeated integration by parts, we get
\begin{eqnarray*}
I(a,j) &=& \frac{1}{a!}\int_0^1 x^a \cos\left(\theta x\right) dx \\
&=& \frac{\theta^{-1}}{a!} x^a\sin\left(\theta x\right)\big|_0^1+\frac{\theta^{-2}}{(a-1)!} x^{a-1}\cos\left(\theta x\right)\big|_0^1 \\
& & -\frac{\theta^{-3}}{(a-2)!}x^{a-2}\sin(\theta x)\big|_0^1-\frac{\theta^{-4}}{(a-3)!}x^{a-3}\cos(\theta x)\big|_0^1\\
 & & +\frac{\theta^{-5}}{(a-2)!}x^{a-4}\sin(\theta x)\big|_0^1+\frac{\theta^{-6}}{(a-5)!}x^{a-5}\cos(\theta x)\big|_0^1 \\
 & & -\frac{\theta^{-7}}{(a-6)!}x^{a-6}\sin(\theta x)\big|_0^1-\frac{\theta^{-8}}{(a-7)!}x^{a-7}\cos(\theta x)\big|_0^1  + \ldots \\
 & = &
 (-1)^{j+1} \sum_{p=0}^{\lfloor a/2\rfloor} \frac{(-1)^p}{(a-2p)!} \theta^{-(2p+1)}
 + (-1)^{\frac{a+1}{2}} \theta^{-(a+1) } 1_{[a\text{ odd}]}.
\end{eqnarray*}
\qed

\subsection*{A.2. Proof of \eqref{eq:verify}}

Rewrite \eqref{eq:equivalent}  as
$$ \bar{A}_n = 2\sum_{\ell=1}^\infty \left(\frac{2(-1)^{\ell+1}}{\pi(2\ell-1)}\right)^{n+1}. $$
Using this and \eqref{eq:tedious}, the right-hand side of \eqref{eq:verify} is seen to be
\begin{eqnarray*}
 &&  \sum_{i=0}^{\lfloor p/2\rfloor}
\sum_{j=0}^{\lfloor q/2\rfloor} \frac{(-1)^{i+j}}{(p-2i)!(q-2j)!} \Bigg[2\sum_{\ell=1}^\infty  
\left(\frac{2(-1)^{\ell+1}}{\pi(2\ell-1)}\right)^{N+2i+2j+2}
\Bigg]\\
& & + (-1)^{\frac{p+1}{2}}\sum_{j=0}^{\lfloor q/2\rfloor} \frac{(-1)^j }{(q-2j)!}
\Bigg[ 2\sum_{\ell=1}^\infty \left(\frac{2(-1)^{\ell+1}}{\pi(2\ell-1)}\right)^{N+p+2j+2}
\Bigg]1_{[p\text{ odd}]}   \\  & & +
(-1)^{\frac{q+1}{2}}\sum_{i=0}^{\lfloor p/2\rfloor} \frac{(-1)^i}{(p-2i)!}
\Bigg[ 2\sum_{\ell=1}^\infty \left(\frac{2(-1)^{\ell+1}}{\pi(2\ell-1)}\right)^{N+2i+q+2}
\Bigg]1_{[q\text{ odd}]}
\\ & & + (-1)^{\frac{p+q}{2}+1}
\Bigg[  2\sum_{\ell=1}^\infty \left(\frac{2(-1)^{\ell+1}}{\pi(2\ell-1)}\right)^{N+p+q+2}
\Bigg] 1_{[p,q\text{ odd}]} \\ 
 &=&  \sum_{i=0}^{\lfloor p/2\rfloor}
\sum_{j=0}^{\lfloor q/2\rfloor} \frac{(-1)^{i+j}\bar{A}_{N+2i+2j+1}}{(p-2i)!(q-2j)!} \\
& & + (-1)^{\frac{p+1}{2}}\sum_{j=0}^{\lfloor q/2\rfloor} \frac{(-1)^j \bar{A}_{N+p+2j+1}}{(q-2j)!}
1_{[p\text{ odd}]}   \\  & & +
(-1)^{\frac{q+1}{2}}\sum_{i=0}^{\lfloor p/2\rfloor} \frac{(-1)^i \bar{A}_{N+q+2i+1}}{(p-2i)!}
1_{[q\text{ odd}]}
\\ & & + (-1)^{\frac{p+q}{2}+1} \bar{A}_{N+p+q+1} 1_{[p,q\text{ odd}]} \\ &=& X_N(p,q).
\end{eqnarray*}
\qed

\newpage


\begin{thebibliography}{02}

\bibitem{aitken} A. C. Aitken. The monomial expansion of determinantal symmetric functions.
\textit{Proc. Royal Soc. Edinburgh (A)} 61 (1943), 300--310.

\bibitem{andre1} D. Andr\'e.
Developpements de $\sec x$ et de $\text{tang } x$.
\textit{Comptes Rendus Acad. Sci. Paris} 88 (1879), 965--967.

\bibitem{andre2} D. Andr\'e.
M\`emoire sur les permutations altern\'ees
\textit{J. Math.} 7 (1881), 167--184.

\bibitem{andreief} C. Andr\'eief, 
Note sur une relation pour les int\'egrales d\'efinies des produits des
fonctions, \textit{M\'em. Soc. Sci. Bordeaux} 2 (1881), 1­-14.

\bibitem{Arnold91} V. I. Arnold. Bernoulli-Euler updown numbers
  associated with function singularities, \textit{Duke math. J.} 63
  (1991), 537--555.

\bibitem{baryshnikov} Yu. Baryshnikov. GUEs and queues. \textit{Probab. Theory and Rel. Fields} 119 (2001), 256--274/

\bibitem{baxterbook} R. J. Baxter. Exactly Solved Models in Statistical Mechanics.
Academic Press, New York, 1982.

\bibitem{boutillier} C. Boutillier. Preprint, \\
\texttt{http://www.arxiv.org/abs/math/0607162}.

\bibitem{borweinetal} J.M. Borwein, P.B. Borwein, K. Dilcher. 
Pi, Euler numbers and asymptotic expansions.
\textit{Amer. Math. Monthly} 96 (1989), 681--687.

\bibitem{elkies} N. Elkies.
On the sums $\sum_{k=-\infty}^\infty (4k+1)^{-n}$. \textit{Amer. Math. Monthly} 110 (2003), 561--573.

\bibitem{aztecdiamonds} N. Elkies, G. Kuperberg, M. Larsen, J. Propp. Alternating-sign matrices and domino tilings. \textit{J. Algebraic Combin.} 1 (1992), 111--132, 219--234.

\bibitem{feit} W. Feit. The degree formula for the skew-representations of the symmetric group. \textit{Proc. Amer. Math. Soc.} 4 (1953), 740--744.

\bibitem{knuthvol3} D. E. Knuth. The Art of Computer Programming, Vol. 3: Sorting and Searching, Second Edition. Addison-Wesley, Reading, MA, 1998.

\bibitem{MSY96} J. Millar, N. J. A. Sloane, N. E. Young. A new
  operation on sequences: the boustrophedon
  transform. \textit{J. Combin. Th. Sr. A} 76 (1996), 44--54.

\bibitem{stanleyvol2} R. P. Stanley. Enumerative Combinatorics, Vol. 2. Cambridge University Press, Cambridge, 1999.


\end{thebibliography}
\end{document}